\documentclass[12pt]{article}
\usepackage{amssymb,amsmath, mathtools}

\usepackage[latin1]{inputenc}
\usepackage[T1]{fontenc}
\usepackage{play}
\usepackage{scalerel}
\usepackage{subcaption}

\usepackage{graphicx}
\DeclareGraphicsExtensions{.pdf,.png,.jpg}

\graphicspath{{images/}}

\textwidth16cm
\hoffset-1cm
\voffset-2cm
\textheight23cm

\usepackage{lipsum}

% ADD THE FOLLOWING COUPLE LINES INTO YOUR PREAMBLE
\let\OLDthebibliography\thebibliography
\renewcommand\thebibliography[1]{
  \OLDthebibliography{#1}
  \setlength{\parskip}{0pt}
  \setlength{\itemsep}{0pt plus 0.3ex}
}

\usepackage{amsmath}
\usepackage{amsthm}
\usepackage{amsfonts}
\usepackage{bbm}
\usepackage{amssymb}
\usepackage{graphicx}
\usepackage{color}

\usepackage{wasysym, marvosym}
\usepackage{lmodern}

\usepackage{fancybox}
\setlength{\fboxsep}{1.5ex}
\newlength{\querylen}
\setlength{\querylen}{\textwidth}
\addtolength{\querylen}{-2\fboxsep}

\newcommand{\prob}{\mathbb{P}}

\newcommand{\dt}{\,\mathrm{d}t}
\newcommand{\dx}{\,\mathrm{d}x}

\DeclareMathOperator\erf{erf}

\DeclareMathOperator\erfc{erfc}

\newtheorem{thm}{Theorem}%[section]

\newtheorem{assertion}[thm]{Proposition}
\theoremstyle{definition}

\theoremstyle{remark}

\begin{document}
\title{Running minimum in the best-choice problem} 
\author{Alexander Gnedin\thanks{Queen Mary, University of London}, \and Patryk Kozie{\l}\thanks{Wroc{\l}aw University of Science and Technology, Department of Fundamentals of Computer Science, Poland} \and and ~~ Ma{\l}gorzata Sulkowska$^{\dagger}$\thanks{Universit{\'e} C{\^o}te d'Azur, CNRS, Inria, I3S, France}}

\maketitle

\begin{abstract}
	We consider the best-choice problem for independent, not necessarily identically distributed observations $X_1, \cdots, X_n$ with the aim of selecting the sample minimum. We show that in this full generality the monotone case of optimal stopping holds and the stopping domain may be defined by the sequence of monotone thresholds. In the iid case we get the universal lower bounds for the success probability. We cast the general problem with independent observations as a variational first-passage problem for the running minimum process which simplifies obtaining the formula for success probability. We illustrate this approach by revisiting the full-information game (where $X_j$'s are iid uniform-$[0,1]$), in particular deriving new representations for the success probability and its limit by $n \rightarrow \infty$. Two  explicitly solvable models with discrete $X_j$'s are presented: in the first the distribution is uniform on $\{j,\cdots,n\}$,  and in the second the distribution is uniform on $\{1,\cdots, n\}$. These examples are chosen to contrast two situations  where the ties  vanish or persist in the large-$n$ Poisson limit.
\end{abstract}

\section{Introduction}
The best-choice aka secretary problems are stochastic optimisation tasks where the objective is to select one or few  `best' 
elements  of a random sequence observed in online regime.
Many versions of the problem are surveyed  in
 \cite{Ferguson, Handbook}, these differ in the way to compare data,   the sample size,   observer's information and other constraints on admissible decision strategies.
In this paper, under `best' we mean the sample minimum, 
admissible decision strategies are stopping times,
the number of observations is fixed and the data elements are drawn independently from distributions which may be different 
or have discontinuities. 
The focus will  be on  examples  highlighting a connection of the optimal stopping strategy with a first passage time for the  running minimum process.

Let $X_1,\cdots,X_n$ be  independent random variables
%sampled from the same or different distributions,
and denote 
$M_j=\min \{X_1,\cdots,X_j\}$, $j\leq n,$
the running minimum. 
Suppose the values  of $X_j$'s are observed sequentially with the objective to stop, with no recall permitted, at some value coinciding with the ultimate minimum $M_n$, considered
 as the best option (possibly not unique) out of $n$ options available.
Specifically we are concerned with the maximum probability
\begin{eqnarray}\label{w}
v_n:=\sup_\tau \prob(X_\tau=M_n),
\end{eqnarray}
where $\tau$ runs  over all stopping times  adapted to the  natural filtration of  the observed sequence. 
The general discrete-time optimal stopping theory \cite{CRS} ensures that there exists a stopping time achieving $v_n$.

We call
$X_j$  a (lower) record in the event $X_j=M_j$. To include the possibility of ties 
we make difference 
between strict records $X_j<M_{j-1}$ and weak records $X_j=M_{j-1}$ 
(where formally $M_0=\infty$). 
Given a  record  $X_j=x$,   
the probability of  $X_j=M_n$  decreases in  
$x$  and increases in $j$, hence it is natural to expect  
that the optimal  stopping time can be  determined in terms of certain critical thresholds 
$b_1\leq\cdots \leq b_n=\infty$. 
%\begin{equation}\label{taud}
%\tau=\min\{j\leq n:~X_j=M_j, ~X_j\leq d_j\},
%\end{equation}
%($\min\varnothing =n$)
For the case of iid observations drawn  from given continuous distribution 
%(we refer to this model in the sequel as `iid continuous case') 
this was shown in the seminal paper by Gilbert and Mosteller \cite{Gilbert_Mosteller},
where the problem was called
 the {\it full-information} game.
The name underscores
 the difference with
the  {\it no-information} problem where the distribution is  unknown and the stopping decisions depend only on the  relative ranks of  observations.
In the iid continuous case 
the optimal value, denoted further $\underline{v}_n$, does not depend on the distribution which may be assumed uniform-$[0,1]$. 
For sampling from the uniform distribution Gilbert and Mosteller \cite{Gilbert_Mosteller}
derived exact and approximate formulas  for the thresholds and used these to observe numerically that $\underline{v}_n$ converge 
to  $\underline{v}=0.580164\cdots$. The limit was confirmed by  Samuels \cite{Handbook} who actually proved that the convergence is monotone, $\underline{v}_n\downarrow\underline{v}$, and derived
an  explicit formula for $\underline{v}$.
Further insight  
%on convergence to the limit  
was gained from coupling with the  asymptotic `$n=\infty$' form of the problem associated with a homogeneous planar Poisson process,
see \cite{GnedinFI, GnedinPlanar}.

Some past work is related   to the best-choice problem with discrete or  not identically distributed observations.
Campbell \cite{Campbell} considered iid discrete observations from a Dirichlet process,  a setting which according to \cite{Ferguson, Handbook} should be classified as a  {\it partial information} 
version of the problem, where the decision process incorporates Bayesian inference about the source distribution.
%observer only knows that the source distribution is chosen from a given prior.
Faller and R{\"u}schendorf \cite{FR} extended the  framework  of \cite{GnedinFI} to connect the finite-$n$ problem with independent observations 
to a possibly nonhomogeneous planar Poisson limit.
Hill and Kennedy \cite{HK} used single-threshold strategies to  prove a general sharp bound $v_n\geq (1-1/n)^{n-1}$ (implying the lower bound $1/e$ uniformly in $n$)
and found explicitly the worst-case  distributions  for independent non-iid  $X_j$'s.
The same bound is known to hold if  the data  are sampled from a continuous distribution but the observation order is controlled by adversary \cite{GnKr}.  
Of recent results we mention the paper by
Nuti \cite{Nuti}, where  
the lower bound $\underline{v}_n$  was shown for the model where the observed sequence is a random permutation of $n$ independent 
non-iid values.

The rest of the paper is  organised as follows.
In Section 2 we  treat the best-choice problem with independent observations  in full generality, argue that the monotone case of optimal stopping \cite{CRS} holds, hence  conclude that 
the structure of the stopping domain is  as in  the iid continuous case of \cite{Gilbert_Mosteller}.
For the iid case with ties we prove the universal lower bounds
 $\underline{v}_n$ and  $\underline{v}$.
Then we cast the general problem with independent observations as
 a variational first-passage problem for the running minimum process.
This  gives some simplification in assessing the success probability, since the conditional payoffs can be expressed directly in terms of the Markov process  $(M_j,~j\leq n)$.
In Section 3 we illustrate  the approach by revisiting the  classic iid continuous case, in particular
deriving new   representations for  $\underline{v}_n$ and $\underline{v}$.
 In  Sections 4 and 5  we scrutinise 
two  explicitly solvable 
models with discrete $X_j$'s: in the first,  suggested to us by Prof. J. Cicho{\'n}, the distribution  
is uniform on $\{j,\cdots,n\}$ (the triangular model),  and in the second
the distribution is uniform on $\{1,\cdots, n\}$ (the rectangular model). 
These examples are chosen to contrast two situations  where the ties  vanish or persist in the large-$n$ Poisson limit,
and both can be included in natural parametric families of distributions which are still tractable.

%\section{Some general  theory}
\section{Some general  facts}

%{\tt not the best title for section}

\subsection{Structure of the stopping domain}
Assuming $X_1,\cdots,X_n$ independent,
let $F_j$  denote the right-continuous  cdf of $X_j$, and let
\begin{eqnarray*}
s(j,x)&:=&\prod_{k=j+1}^n (1-F_k(x-)),\\
v(j,x)&:=&\sup_{\{\tau:~\tau> j,~X_\tau\leq x\}} \prob( X_\tau=\min\{X_{j+1},\cdots,X_n\}).
\end{eqnarray*}
If no choice has been made from the first $j-1$ observations, then
conditionally on record $X_j=x$   stopping is successful 
with probability $s(j,x)$, and the continuation value achievable by skipping the record is $v(j,x)$, 
%$$
%v(j,x):=\sup_{\{\tau:~\tau> j,~X_\tau\leq x\}} \prob( X_\tau=\min\{X_{j+1},\cdots,X_n\}),
%$$
regardless of  the past observed values.
 Let $B:=\{(j,x): s(j,x)\geq  v(j,x)\}$, seen as a subset of $\{1,\cdots,n\}\times {\mathbb R}$.
 The optimality principle for problems with finitely many decision steps \cite{CRS}
entails that the  stopping time 
$$
\tau_n:=\min\{j\leq n:~ X_j=M_j,   ~(j, X_j)\in B\}
$$
($\min\varnothing=n$) is optimal.

The function $s(j,x)$ is nonincreasing and left-continuous in $x$, and nondecreasing in $j$.
Likewise, $v(j,x)$  is  nondecreasing and right-continuous in $x$.
Discontinuity 
may only occur if some of  the distributions $F_{j+1},\cdots,F_n$ have an atom at $x$, in which case the jump  of $s(j,x)$
is equal to the probability of $\min\{X_{j+1},\cdots,X_n\}=x$,  and the jump   of $v(j,x)$ is not  bigger than that.
By the monotonicity,  there exists a threshold $b_j$ (finite or infinite)
such  that $s(j,x)\geq v(j,x)$ for $x< b_j$ and $s(j,x)<v(j,x)$ for $x>b_j$, so if the threshold is finite
  $\{x: s(j,x)\geq v(j,x)\}$ is either $(-\infty,b_j)$ or $(-\infty, b_j]$.

\paragraph{Example}
The  {\it Bernoulli pyramid}  of Hill and Kennedy  \cite{HK} illustrates an extreme possibility.
This has $X_1=1$, and for $j = 2, \cdots, n$ the observations are 
 two-valued  with
$\prob(X_j=1/j)=1-\prob(X_j=j)=p$. 
The relative ranks assume only extreme values and are independent.
From this one computes readily that
 $s(j,x)=(1-p)^{n-j}$ and  $v(j,x)=(n-j)p(1-p)^{n-j-1}$,
hence
 it is optimal to stop if $1-p\geq (n-j)p$. That is to say,
 $b_j=-\infty$  for $j<   n-   (1-p)/p$ and $b_j=\infty$  for $j\geq  n-   (1-p)/p$. 
The worst-case scenario for the observer is 
the parameter value $p=1/n$, when the optimal best-choice probability  is $v_n=(1-1/n)^{n-1}$.
\vskip0.2cm

\begin{assertion}\label{as2}
The optimal thresholds satisfy $b_1\leq\cdots\leq b_n=\infty$,  and 
$B$ is a closed (no-exit) set for the running minimum process.  
\end{assertion}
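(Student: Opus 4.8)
The two assertions are essentially the same statement once the left-ray description of the sections $\{x:(j,x)\in B\}$ obtained above is used; the plan is to prove the monotonicity $b_1\le\cdots\le b_n$ together with $b_n=\infty$, and to deduce no-exit from these. The endpoint is immediate: at $j=n$ nothing remains to be observed, so $s(n,x)$ is an empty product equal to $1$ while $v(n,x)=0$ (skipping the last record cannot succeed), hence $(n,x)\in B$ for every $x$. For the monotonicity, since each section of $B$ is a left-ray it is enough to prove the pointwise implication
$$
s(j,x)\ge v(j,x)\ \Longrightarrow\ s(j+1,x)\ge v(j+1,x),\qquad\text{i.e.}\qquad(j,x)\in B\ \Longrightarrow\ (j+1,x)\in B.
$$

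The first step here is a one-step recursion for the continuation value, obtained by conditioning on $X_{j+1}$; this is legitimate because, as already noted, $v(j,\cdot)$ depends only on $(j,x)$ and not on the previously observed values. After the time-$j$ record of height $x$ has been skipped, observing $X_{j+1}=y\le x$ leaves one at a fresh record of height $y$ at time $j+1$, worth $\max\{s(j+1,y),v(j+1,y)\}$, whereas observing $X_{j+1}=y>x$ keeps the admissible selections constrained to level $\le x$ and is worth $v(j+1,x)$, independently of the exact value $y$. Hence
$$
v(j,x)=\int_{(-\infty,x]}\max\{s(j+1,y),v(j+1,y)\}\,dF_{j+1}(y)\;+\;\prob(X_{j+1}>x)\,v(j+1,x).
$$

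Next, since $s(j+1,\cdot)$ is nonincreasing and $y\le x$ on the domain of integration, the integrand is $\ge s(j+1,x)$, so $v(j,x)\ge F_{j+1}(x)\,s(j+1,x)+\prob(X_{j+1}>x)\,v(j+1,x)$. Combining this with $s(j,x)=(1-F_{j+1}(x-))\,s(j+1,x)$ and the hypothesis $s(j,x)\ge v(j,x)$ gives
$$
\big(\prob(X_{j+1}>x)-\prob(X_{j+1}<x)\big)\,s(j+1,x)\ \ge\ \prob(X_{j+1}>x)\,v(j+1,x),
$$
so that when $\prob(X_{j+1}>x)>0$ dividing through forces $v(j+1,x)\le s(j+1,x)$, i.e.\ $(j+1,x)\in B$. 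The remaining case $\prob(X_{j+1}>x)=0$ is handled by inspection: if $X_{j+1}=x$ a.s.\ the recursion collapses to $v(j,x)=\max\{s(j+1,x),v(j+1,x)\}$ with $s(j,x)=s(j+1,x)$, so $(j,x)\in B$ and $(j+1,x)\in B$ are equivalent; and if instead $\prob(X_{j+1}<x)>0$ then $(j,x)\notin B$, so nothing has to be shown --- when $s(j+1,x)>0$ the recursion gives $v(j,x)\ge\E[s(j+1,X_{j+1})]\ge s(j+1,x)>s(j,x)$, and when $s(j+1,x)=0$ some later observation lies a.s.\ below $x$, the constraint $X_\tau\le x$ becomes vacuous, $v(j,x)$ equals the (strictly positive) unconstrained best-choice probability for $X_{j+1},\dots,X_n$, and $s(j,x)=0$.

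Once $b_1\le\cdots\le b_n$ is established, the no-exit property is immediate: $(M_j)$ is nonincreasing, so $(j,M_j)\in B$ gives $M_{j+1}\le M_j\le b_j\le b_{j+1}$ and hence $(j+1,M_{j+1})\in B$ --- in the borderline case $M_{j+1}=b_{j+1}$ all four quantities coincide, so $(j,b_j)\in B$ and the displayed implication again places $(j+1,b_{j+1})$ in $B$ --- and iterating up to $j=n$ shows $B$ is never left. The delicate point throughout, and the one I expect to absorb most of the work, is the possible presence of atoms of the distributions $F_k$ exactly at the current threshold; this is precisely what forces the careful distinction between $F_{j+1}(x-)$ and $F_{j+1}(x)$ and the separate treatment of the degenerate sub-case above, everything else being a one-line estimate and routine bookkeeping.
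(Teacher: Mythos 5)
Your proof is correct and rests on the same ingredients as the paper's own argument: the one-step Bellman recursion for $v(j,x)$ obtained by conditioning on $X_{j+1}$, the factorization $s(j,x)=(1-F_{j+1}(x-))\,s(j+1,x)$, the monotonicity of $s(j+1,\cdot)$, and the same careful treatment of a possible atom of $F_{j+1}$ at the threshold. The only organizational difference is that you establish the pointwise implication $(j,x)\in B\Rightarrow(j+1,x)\in B$ directly at every $x$ (which also handles the borderline case $b_j=b_{j+1}$ in one stroke), whereas the paper proves the contrapositive at continuity points $x>b_{j+1}$ within a backward induction and then treats the boundary atom separately; this is a mild streamlining rather than a genuinely different route.
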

\begin{proof}
Arguing by backward induction 
suppose  $b_{j+1}\leq\cdots\leq b_n=\infty$
and that $B$ is no-exit for the paths  entering on step $j+1$ or later. 
If  $1-F_{j+1}(b_{j+1})=0$  then obviously $s(j,x)=0$ for $x>b_{j+1}$,  whence $b_{j}\leq b_{j+1}$.
Suppose $1-F_{j+1}(b_{j+1})>0$. 
For $x>b_{j+1}$ a continuity point of $F_{j+1}$ we have 
\[
\begin{split}
	v(j,x) & \geq 
	[1-F_{j+1}(x)]v(j+1,x)\\
	%&+&[F_{j+1}(x)-F_{j+1}(x-)]\max\{v(j+1,x),s(j+1,x)\}\\
	& + \int_{-\infty}^xs(j+1,y){\rm d} F_{j+1}(y)\\
	& \geq s(j+1,x)\geq s(j,x),
\end{split}
\]
%\begin{eqnarray*} 
%v(j,x)&\geq&
%[1-F_{j+1}(x)]v(j+1,x)\\
%%&+&[F_{j+1}(x)-F_{j+1}(x-)]\max\{v(j+1,x),s(j+1,x)\}\\
%&+&
%\int_{-\infty}^xs(j+1,y){\rm d} F_{j+1}(y)\\
%&\geq& s(j+1,x)\geq s(j,x),
%\end{eqnarray*}
and $v(j+1,x)> s(j+1,x)$ implies that the inequality is strict whenever  $F_{j+1}(x)<1$, hence $b_j\leq x$.  Letting  $x\downarrow b_{j+1}$ gives  $b_j\leq b_{j+1}$,
which by the virtue  of natural monotonicity of the running minimum yields the induction step if   either  $b_j<b_{j+1}$, or  $b_j=b_{j+1}$  and $s(j,b_j)< v(j,b_j)$.

It remains to exclude the possibility  $b_j=b_{j+1}$ when stopping 
at record $X_k=b_j$ is optimal for $k=j$ and not optimal for $k=j+1$.
But
in the case $b_j=b_{j+1}$ and $s(j,b_j)\geq v(j,b_j)$, for $z=b_j$ we obtain
\[
\begin{split}
	s(j+1,z)\geq s(j,z)  & \geq  v(j,z)\\
	& = [1-F_{j+1}(z)] v(j+1,z)\\
	& + \left[F_{j+1}(z)-F_{j+1}(z-)\right] \max \{s(j+1,z),v(j+1,z)\} \\
	& + \int_{-\infty}^{z-} s(j+1,y){\rm d} F_{j+1}(y)\geq v(j+1,z),
\end{split}
\]
%\begin{eqnarray*}
%s(j+1,z)\geq s(j,z) & \geq& v(j,z)\\
%&=&[1-F_{j+1}(z)] v(j+1,z)\\
%&+&\left[F_{j+1}(z)-F_{j+1}(z-)\right] \max \{s(j+1,z),v(j+1,z)\} \\
%&+&\int_{-\infty}^{z-} s(j+1,y){\rm d} F_{j+1}(y)\geq v(j+1,z),
%\end{eqnarray*}
therefore if stopping at record $X_j=b_j$ is optimal this also holds for $X_{j+1}=b_j$.
This completes the induction step.
\end{proof}

Implicit to the proof is the formula for the continuation value in $B$
\begin{equation}\label{v-in-B}
v(j,x)=\sum_{k=j+1}^n \left( \left( \prod_{i=j+1}^{k-1}(1-F_i(x)) \right) \int_{-\infty}^{x} s(k,y) \,{\rm d}F_k(y) \right), ~~(j,x)\in B.
\end{equation}

 Extending our notation we may understand the running minimum $(M_j,  ~j\leq n)$ as a transient Markov 
chain with two types of states. A state $(j,x)$ is associated with the event $X_j>M_j=x$ and state $(j,x)^\circ$ with the record $X_j=M_j=x$.
The transition probabilities are straightforward in terms of the $F_j$'s, for instance a transition from either $(j,x)$ or $(j,x)^\circ$ to $(j+1,x)^\circ$ 
occurs with probability $F_{j+1}(x)-F_{j+1}(x-)$.

Accordingly, we say that a first passage into $B$ occurs {\it by jump} if the running minimum enters the set at some record time (in a state $(j,x)^\circ$), and we speak of the first passage {\it by drift}
 otherwise (i.e. in a state $(j,x)$).
The success probability is $s(j,x)$ if the first passage occurs in $(j,x)^\circ$ and it is $v(j,x)$ given by (\ref{v-in-B}) for  $(j,x)$.
The   discrimination of first passage cases
leads to a two-term decomposition of $v_n$, which we will derive in the sequel for concrete examples.

\subsection{Bounds in the iid case}

Suppose $X_1,\cdots,X_n$ are iid with arbitrary distribution $F$. 
By breaking ties with the aid of auxiliary randomisation   we shall connect the problem to the iid continuous case.

It is a standard fact, known as probability integral transform, that
$F(X_j)$ has uniform distribution if $F$ is continuous.
In general $F(X_j)$ has a  `uniformised'  distribution on $[0,1]$  which does not charge the open gaps in the range of $F$ 
\cite{RCS}. To spread a mass over the gaps, let $U_1,\cdots,U_n$ be iid uniform-$[0,1]$ and independent of $X_j$'s then we will have that
the random variables
\begin{equation}\label{YX}
Y_j:=F(X_j)- [F(X_j)-F(X_j-)]U_j
\end{equation}
are also iid uniform-$[0,1]$.
Moreover, they satisfy $F^{\leftarrow}(Y_j)=X_j$ where $F^{\leftarrow}$ is the generalised inverse.
By monotonicity, $Y_j=\min\{Y_1,\cdots,Y_n\}$ implies $X_j=M_n=\min\{X_1,\cdots,X_n\}$ therefore 
for any stopping time $\tau$ adapted to the $Y_j$'s
$$
\prob(Y_\tau=\min\{Y_1,\cdots,Y_n\})\leq \prob(X_\tau=M_n)\leq v_n.
$$
The second inequality follows since such $\tau$ is a randomised stopping time hence cannot improve upon the nonrandomised optimum \cite{CRS}.
Taking supremum in the left-hand side gives $\underline{v}_n\leq v_n$.

The probability of a tie for the minimum in  sequence $X_1,\cdots,X_n$ is given by

\begin{eqnarray}\label{delta}
\delta_n:=1- n \int_{-\infty}^\infty  (1-F(x))^{n-1}{\rm d}F(x).
\end{eqnarray}
If there is no tie for the minimum, that is $X_j=M_n$ holds for exactly one $j\leq n$, then
$X_j=M_n$ implies $Y_j=\min\{Y_1,\cdots,Y_n\}$. 
Thus we obtain
$$\prob(X_\tau=M_n)-\delta_n\leq \prob(X_\tau=M_n,~{\rm no ~tie})\leq \prob(Y_\tau=\min\{Y_1,\cdots,Y_n\})\leq \underline{v}_n$$
for each $\tau$ adapted to the $X_j$'s (hence also adapted to the $Y_j$'s). 
Choosing the optimal $\tau=\tau_n$  in the left-hand side gives  $v_n\leq \underline{v}_n+ \delta_n$.
To summarise, with the account of monotonicity of  $\underline{v}_n$'s \cite{Handbook} we have the following result.
\begin{assertion} \label{as_bounds_iid} In the iid case
$
\underline{v}_n\leq v_n\leq \underline{v}_n+\delta_n,
$
 hence $\underline{v}_n>\underline{v}$
implies the universal sharp bound $v_n>\underline{v}$.
\end{assertion}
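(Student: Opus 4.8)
The plan is to couple the general iid problem with the full-information game by a randomised probability integral transform, so as to sandwich $v_n$ between $\underline v_n$ and $\underline v_n+\delta_n$, and then to quote the known monotone convergence $\underline v_n\downarrow\underline v$ for the final implication.

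The coupling is the one behind (\ref{YX}): given iid $X_1,\dots,X_n$ with cdf $F$, take iid uniform-$[0,1]$ variables $U_1,\dots,U_n$ independent of the $X_j$'s and set $Y_j=F(X_j)-[F(X_j)-F(X_j-)]U_j$. I would first record the two structural facts: the $Y_j$ are iid uniform-$[0,1]$ (the randomisation spreads mass uniformly across the open gaps in the range of $F$), and $X_j=F^{\leftarrow}(Y_j)$ a.s. The latter gives $\sigma(X_1,\dots,X_j)\subseteq\sigma(Y_1,\dots,Y_j)$, so every $X$-adapted stopping time is $Y$-adapted, while a $Y$-adapted stopping time is a \emph{randomised} stopping rule for the $X$-sequence; and since $(Y_1,\dots,Y_n)$ is iid uniform, $\sup_\tau\prob(Y_\tau=\min\{Y_1,\dots,Y_n\})$ over $Y$-adapted $\tau$ is exactly $\underline v_n$.

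For $\underline v_n\le v_n$: monotonicity of $F^{\leftarrow}$ makes $Y_j=\min\{Y_1,\dots,Y_n\}$ force $X_j=M_n$, so for any $Y$-adapted $\tau$ we get $\prob(X_\tau=M_n)\ge\prob(Y_\tau=\min\{Y_1,\dots,Y_n\})$; since $\tau$ is a randomised rule for the $X$-problem it cannot beat the nonrandomised optimum, $\prob(X_\tau=M_n)\le v_n$, and taking the supremum gives the claim. For $v_n\le\underline v_n+\delta_n$ I would run the comparison the other way on the no-tie event: if the minimum of $X_1,\dots,X_n$ is uniquely attained at $j$ then $X_j<X_k$ and hence $Y_j\le F(X_j)\le F(X_k-)\le Y_k$ for each $k\ne j$, i.e. $Y_j=\min\{Y_1,\dots,Y_n\}$; applied to the optimal, $X$-adapted (hence $Y$-adapted) $\tau_n$, and recalling that the probability of a tie is $\delta_n$ as in (\ref{delta}), this yields
\[
v_n-\delta_n\le\prob(X_{\tau_n}=M_n,\ \text{no tie})\le\prob\bigl(Y_{\tau_n}=\min\{Y_1,\dots,Y_n\}\bigr)\le\underline v_n .
\]
Finally $\underline v_n\downarrow\underline v$ \cite{Handbook} gives $\underline v_n\ge\underline v$, so with $v_n\ge\underline v_n$ the hypothesis $\underline v_n>\underline v$ forces $v_n\ge\underline v_n>\underline v$, which is the asserted universal bound.

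Once the coupling is in hand the two inequalities are just monotonicity bookkeeping; I expect the only points needing genuine care to be the two structural facts about (\ref{YX}) — that the randomised transform really yields iid uniform variables and that it transfers admissible stopping rules in the indicated directions, the latter resting on the standard fact \cite{CRS} that randomisation cannot improve the optimal success probability.
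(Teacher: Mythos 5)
Your proof is correct and follows essentially the same route as the paper: the randomised probability integral transform (\ref{YX}), the observation that $Y$-adapted rules are randomised $X$-rules (and vice versa $X$-adapted rules are $Y$-adapted), the transfer of the minimum in one direction unconditionally and in the other direction on the no-tie event, and the monotonicity $\underline v_n\downarrow\underline v$ for the final implication. Your chain $Y_j\le F(X_j)\le F(X_k-)\le Y_k$ on the no-tie event is just a more explicit spelling-out of a step the paper states without detail.
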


\noindent
The probability of a tie for the first place (maximum)  is a thoroughly studied subject \cite{Tie}. With the obvious adjustment
these results are applicable  
 to infer about possible asymptotic behaviours of (\ref{delta}).

\subsection{Poisson limit}

Let $N$ be a Poisson point process in $[0,T)\times {\mathbb R}$ with $0<T\leq \infty$ and some  nonatomic intensity measure $\mu$, which satisfies 
\begin{eqnarray*}
\mu(\{t\}\times{\mathbb R})&=&0,~~0\leq t<T,\\
\mu([t,T)\times [0,\infty))&=&\infty,~ 0<t<T,\\ 
\mu([0,t)\times [0,\infty))&=&\infty,~ 0<t<T,\\ 
\mu([0,T)\times (-\infty,x])&<&\infty, ~x\in {\mathbb R}.
\end{eqnarray*}
The generic point $(t,x)$ of $N$ is thought of as 
mark $x$ observed at time $t$; this is  considered  a strict record if $N([0,t)\times [0,x])=1$ (point $(t,x)$ itself) and weak record if $N([0,t)\times [0,x))=0$.
The first assumption excludes multiple points, but several points of $N$ can share the same mark $x$ if 
the measure is singular with
$\mu([0,T)\times\{x\})>0$.
Furthermore, there is a well defined 
 running minimum process $(Z_t,~t\in[0,T))$, where $Z_t$ is the minimal mark observed within time interval $[0,t]$.
The  information of observer accumulated by time $t$ is  the configuration of  $N$-atoms on $[0,t]\times {\mathbb R}$.
The task is to stop with the highest possible probability at observation with the minimal mark $Z_{T}:=\lim_{t\to T} Z_t$.

We refer to  Faller and R{\"u}schendorf \cite{FR} for more formal and complete treatment of the optimal stopping problem under slightly different assumptions on $\mu$.
In particular, they only consider the case $T=1$, but this is not  a substantial constraint, because increasing transformations of  scales 
do not really change the problem.
Part (a) of their  Theorem 2.1 implies 
that there exists a nondecreasing  function $b: [0,T)\to (-\infty,\infty]$ such that it is optimal to stop at the first record 
falling in the domain $B=\{(t,x): ~x\leq b(t)\}$. Equation (2.7) of \cite{FR} gives a 
multiple integral expression  for the probability of success under the optimal stopping time.

A connection 
with the discrete time stopping problem is the following. Let $X_1,\cdots,X_n$ be independent,  possibly with different  distributions that may depend on $n$.  
Consider the scatter of $n$ points 
$\{(1, X_1),\cdots, (n, X_n)\}$,  subject to   a suitable monotone coordinate-wise scaling, as a finite point process $N_n$ on the plane, and suppose that $N_n$   converges 
weakly to $N$.
Then,
by part (b)  of the cited theorem from \cite{FR}, $v_n$ converges to the optimal stopping value for $N$.
Part  (c) asserts that stopping on the first record of $N_n$
falling in $B$ is asymptotically optimal for the discrete time  problem.

The choice of scaling is dictated by the vague convergence of the  intensity measure ${\mathbb E} N_n(\cdot)$ to a Radon measure on the plane.
In the iid case this is  typically a linear scaling from the extreme-value theory \cite{Resnick}.
See \cite{Falk} for examples of  scaling for non-iid  data.

\section{The full-information game revisited}

%\subsection{The probability of success}
\subsection{The discrete time problem}

Let $X_1, \cdots, X_n$  be iid uniform-$[0,1]$. Thus ties have probability zero and all records are strict. 
Consider stopping time of the form
$$\tau=\min\{j\leq n: X_j\leq b_j,\, X_j=M_j\},$$
where $0\leq b_1\leq\cdots\leq b_n\leq 1$ are arbitrary thresholds, not necessarily optimal.
We aim to decompose the success probability  ${\mathbb P}(X_\tau=M_n)$ according to the distribution of the  first passage time for the running minimum
$$\sigma:=\min\{j\leq n:~ M_j\leq b_{j+1}\}.$$
Obviously $\sigma\leq\tau$: namely $\sigma=\tau$ if $\sigma$ is a record time, and otherwise $\sigma<\tau$ and $\tau$ is the first record time (if any) after $\sigma$.

Suppose $\sigma=j, M_j=x$. If $x\leq b_j$ then $M_{j-1}\leq b_j$ is impossible (otherwise we would have $\sigma<j$), hence $M_{j-1}>b_j$ implying that $\tau=j$ is a record time;
we qualify this event as a passage by jump.
Alternatively, in the case $b_j<x\leq b_{j+1}$ the passage is by drift and $\tau>j$ coincides with the first subsequent record time (if any).
Accordingly, we have for the joint distribution of 
$\sigma$ and  $M_\sigma$ 
$$
{\mathbb P}(\sigma=j\,,\, M_{j}\in [x, x+{\rm d}x])=\begin{dcases*} 
~~(1-b_j)^{j-1}\, {\rm d} x,  ~~~  0\leq x \leq b_j,\\
~j (1-x)^{j-1}\,{\rm d} x , ~~~   b_j<x\leq  b_{j+1}.
\end{dcases*} $$  
In the first case the (conditional) success probability is $(1-x)^{n-j}$, 
%$$\int_{d_r}^1 y^{n-r}{\rm d}y=\frac{1-d_r^{n-r+1}}{n-r+1}.$$
and in the second case 
$$\sum_{k=j+1}^{n} (1-x)^{k-j-1} \int_0^x  (1-y)^{n-k} {\rm d} y  =  
\sum_{k=1}^{n-j} {[(1-x)^{n-j-k}-  (1-x)^{n-j}]}/{k}.$$
Integrating yields 
\begin{eqnarray*}
{\mathbb P}(\sigma=j,~   X_\tau=M_n)&=&\\
\frac{(1-b_j)^{j-1}-(1-b_j)^{n}}{n-j+1}&+&
j \sum_{k=j}^{n-1} \left[\frac{(1-b_j)^{k}-(1-b_{j+1})^{k}}{k(n-k)}\,-\,\frac{(1-b_j)^n-(1-b_{j+1})^n}{n(n-k)}   \right],
\end{eqnarray*}
which can be compared with Equation (3c-1) from \cite{Gilbert_Mosteller} (where $d_j$ is our $1-b_j$) for $\prob(\tau=j, X_j=M_n)$.
Summation gives the success probability
\begin{equation}\label{GM}
{\mathbb P}( X_\tau=M_n)= \frac{1}{n}\left[1 -\sum_{j=1}^n {(1-b_j)^n}\right] + \sum\limits_{j=1}^{n-1}\sum_{i=1}^j \left[\frac{(1-b_i)^j}{j(n-j)}-\frac{(1-b_i)^n}{n(n-j)}\right],
\end{equation}
with two parts corresponding to the passage by jump and drift.

The {\it optimal} threshold $b_j$  is a solution  to 
$$\sum_{i=1}^{n-j} \frac{(1-x)^{-i}-1}{i}=1,~~~~x\in[0,1].$$
Using this Sakaguchi \cite{Sak} obtained a nice formula
$$\underline{v}_n= \frac{1}{n}\left(1+\sum_{j=1}^{n-1}\sum_{k=j}^{n-1} \frac{(1-b_j)^k}{j}\right),$$
which also gives the mean ${\mathbb E}[\tau_n/n]$ for the optimal $\tau_n$, as was shown by Tamaki \cite{Tamaki}.

\subsection{Poissonisation} \label{sec:GMfull}

Let $N$ be a planar Poisson process in $[0,1]\times[0,\infty)$
with the Lebesgue measure as intensity.
It is well known and easy to verify that
for $X_1,\cdots,X_n$ iid uniform-$[0,1]$, 
the planar point process with  atoms $(j/n, nX_j), j\leq n$, converges weakly to $N$. 

There exists a still closer  connection through 
embedding of the finite sample in $N$ \cite{GnedinFI}.  Consider  
instead of  the uniform distribution 
the mean-$n$ exponential. A sample from this can be implemented by splitting $[0,1]$ evenly in $n$ subintervals, and
 identifying the $j$th point of  $N_n$  with the point of $N$ having the minimal 
 mark among the arrivals within the time interval $[(j-1), j/n)$.
By this coupling the inequality $\underline{v}_n>\underline{v}$ becomes immediate by noting that the stopping problem for $N_n$ is equivalent to a
choice  from $N$ where the observer at time $j/n$ has the power to revoke  any arrival within $[(j-1)/n, j/n)$.

The running minimum $(Z_t, ~t\in[0,1))$ is defined to be the lowest mark of arrival on $[0,t]$.
% it is an instance  of $F$-extremal process \cite{Resnick}.
With every point $(t,x)\in [0,1]\times [0,\infty)$ we associate a
rectangular {\it box} $([t,1]\times[0,x])$ to the south-east with the corner point $(t,x)$ excluded.
 If  there is an atom of $N$ at location $(t,x)$ we regard it as record if there are no other atoms south-west of $(t,x)$, and it is the last record (with mark $Z_{1})$
if the box  contains no atoms. 
(See Figure~\ref{fig:record_rect}.)

\begin{figure}[!ht]
	\centering{
		\includegraphics[width = 0.4 \textwidth]{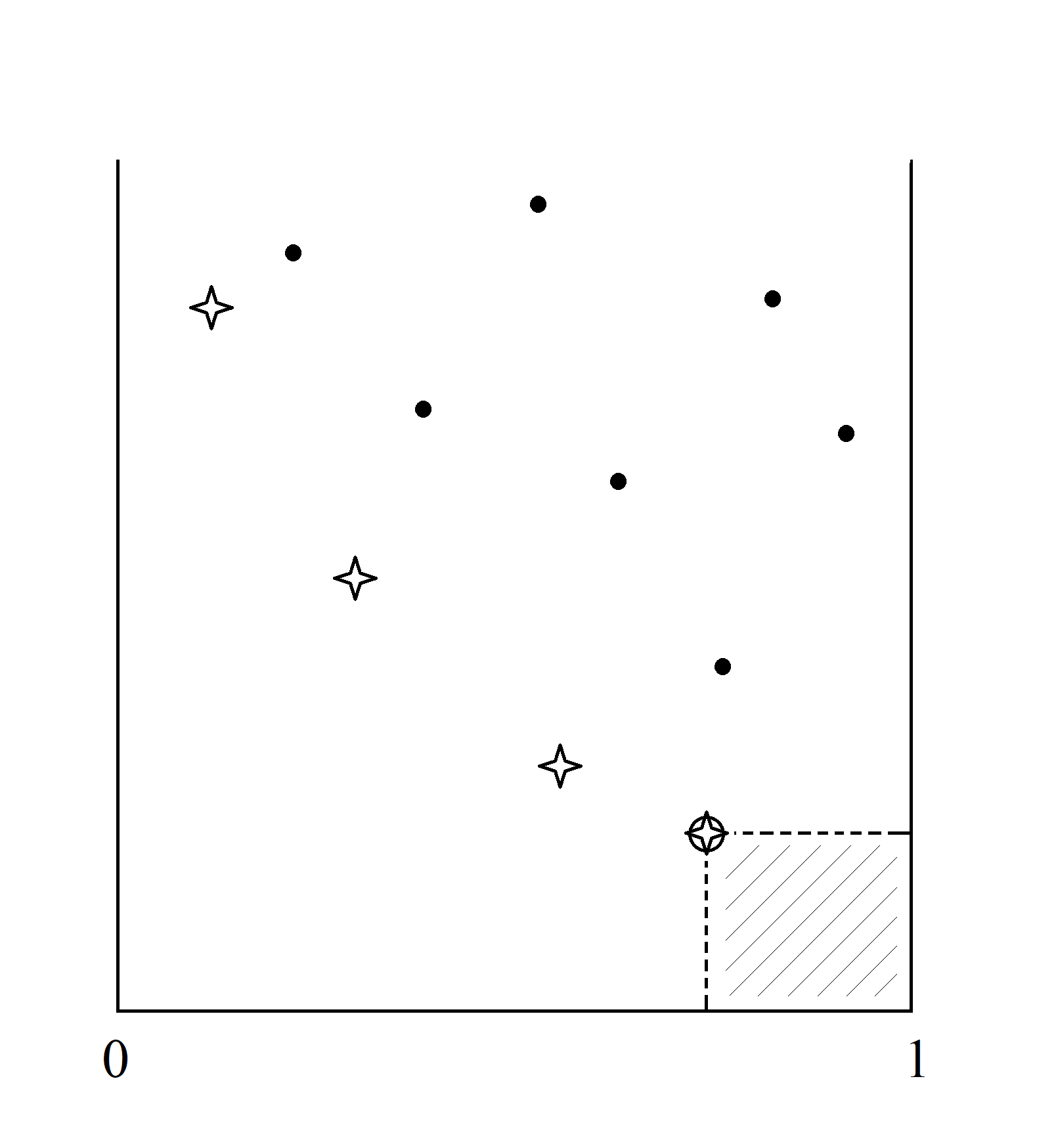}
		\caption[]{Records in a planar Poisson process in $[0,1]\times [0,\infty)$; 	\scalerel*{\includegraphics{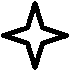}}{B} - record (no Poisson atoms south-west of it), 
		\scalerel*{\includegraphics{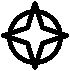}}{B} - the last record (its box contains no Poisson atoms).} \label{fig:record_rect}
	}
\end{figure}

%We define 	
 %the running minimum process $X:=(X_t, ~t\in [0,1])$ by
%letting $X_t$ to be the smallest value arrived on $[0,t]$. 
%The process is adapted to the filtration
%${\cal F}=({\cal F}_t,~t\in[0,1])$ where
%${\cal F}_t$ is the sigma-algebra generated by the Poisson process restricted to $[0,t]\times[0,\infty)$.
%Since $X$ is a Markov process,
%it is sufficient to consider stopping strategies adapted to the less rich filtration, generated by $X$ itself.
% If at time $t$ the value of $X$ falls to $x$, we say that $(t,x)$ is a record. 

An {\it admissible} strategy is a stopping time which takes values in the random set of record times 
and may also assume value $1$ (the event of no choice). With every nondecreasing continuous 
function $b:[0,1]\to[0,\infty)$, $b(1-)=\infty$,
we associate a stopping time
$$\tau=\inf\{t: (t, Z_t){\rm ~~is ~ record~},~Z_t\leq b(t)\},$$
where $\inf\varnothing=1$.
The associated  first passage time into $B$
 is defined as 
$$\sigma=\inf\{t: ~Z_t\leq b(t)\}.$$
Clearly, $\sigma$ is adapted to the natural filtration of $N$, $\sigma<1$ a.s. and $\sigma\leq \tau$. However, $\sigma$ is not admissible,
because in the event $Z_\sigma=b(\sigma)$ of the boundary crossing by drift,    there is  arrival at time $\sigma$ 
with probability  zero.

The modes of first passage by jump or drift 
can be distinguished in geometric terms. Move the  rectangular frame spanned on $(0,0)$ and $(t, b(t))$ 
until one of the sides meets a point of $N$. If the point falls on the eastern side of the frame, the running minimum crosses the boundary  by a jump and $\sigma=\tau$,
if the point appears on the northern side, the boundary is hit by a drift  and $\sigma<\tau$. See Figures \ref{fig:sigma_is_tau} and \ref{fig:sigma_less_tau}.

\begin{figure}[ht]
	\begin{center}
		\begin{minipage}[b]{0.45\textwidth}
			\includegraphics[width=1.0\textwidth]{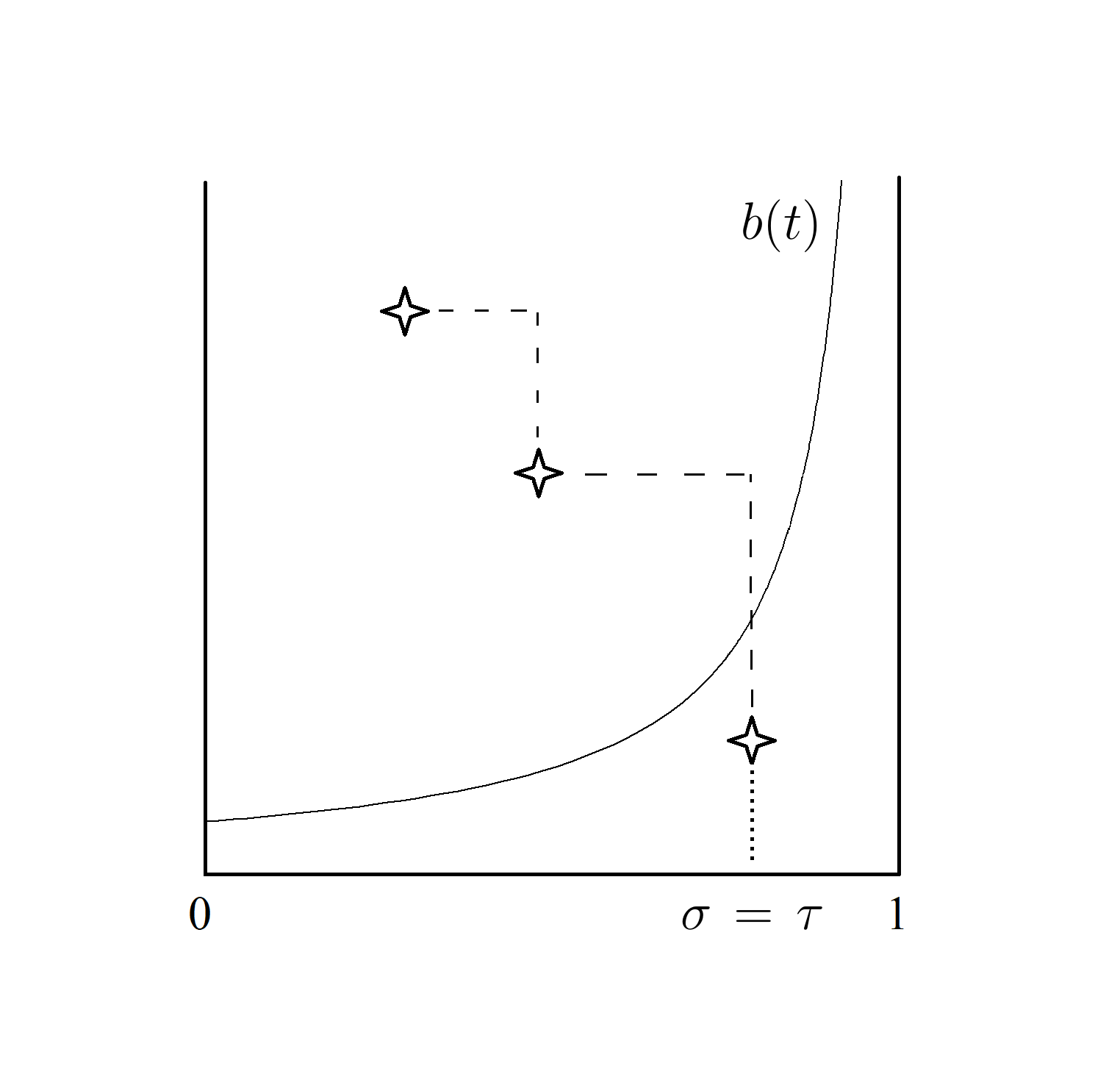}
		\end{minipage}
		\begin{minipage}[b]{0.45\textwidth}
			\includegraphics[width=1.0\textwidth]{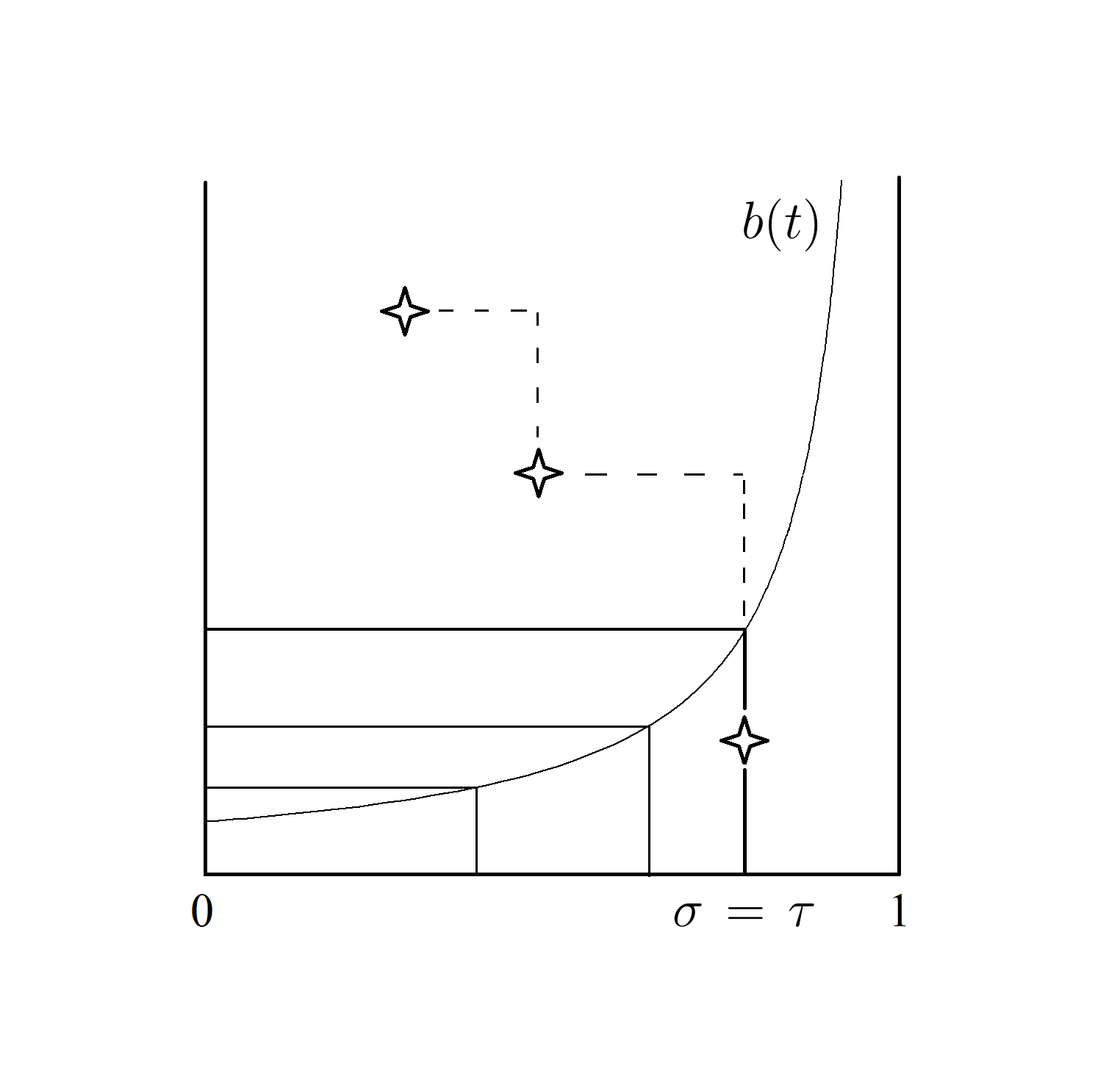}
		\end{minipage}
		\caption{Event $\sigma=\tau$,  the boundary $b(t)$ is crossed by a jump, hence the record is  caught by the eastern side of a frame spanned on $(0,0)$ and $(t,b(t))$.} \label{fig:sigma_is_tau} 
	\end{center}  
\end{figure}

\begin{figure}[ht]
	\begin{center}
		\begin{minipage}[b]{0.45\textwidth}
			\includegraphics[width=1.0\textwidth]{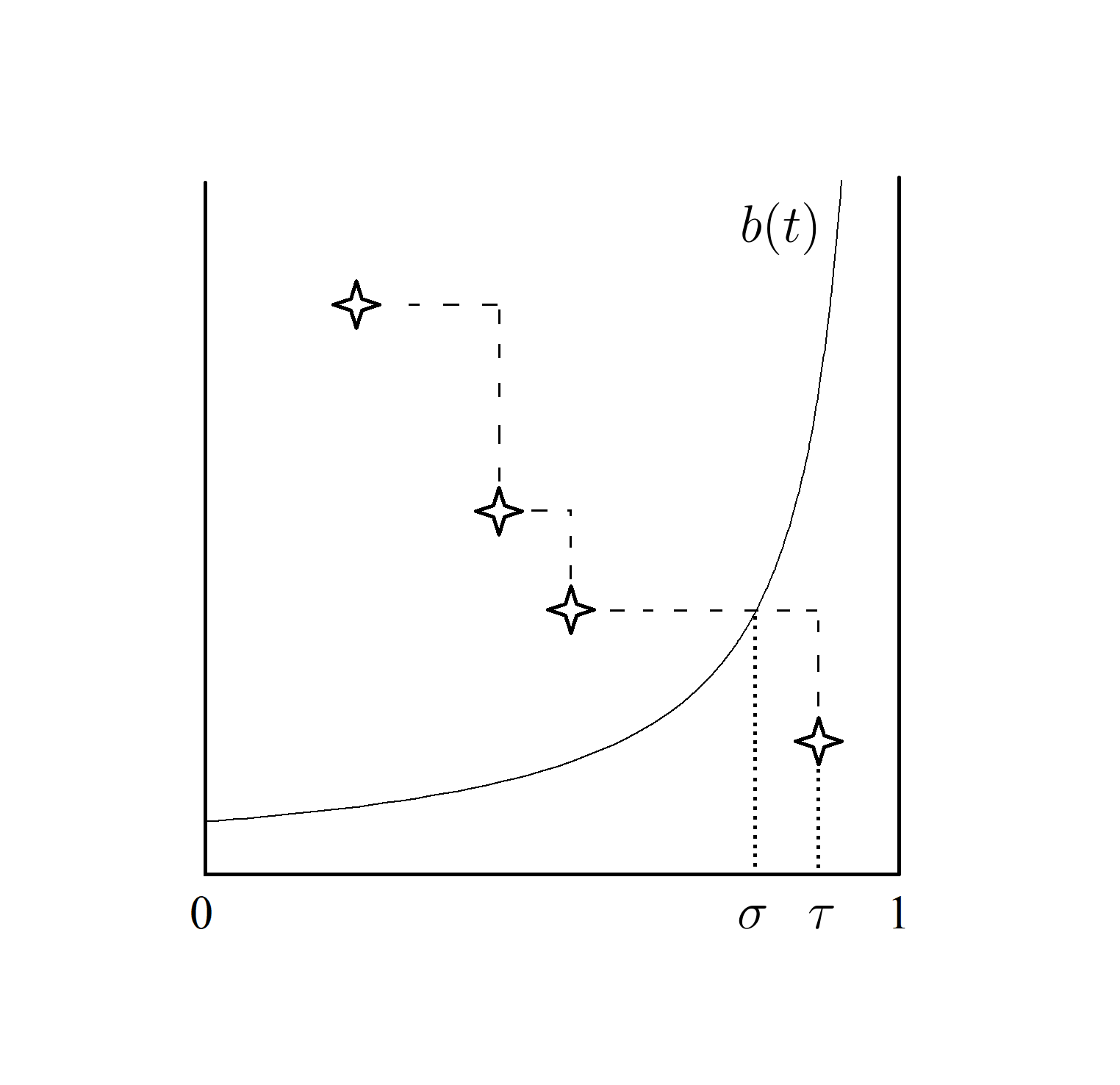}
		\end{minipage}
		\begin{minipage}[b]{0.45\textwidth}
			\includegraphics[width=1.0\textwidth]{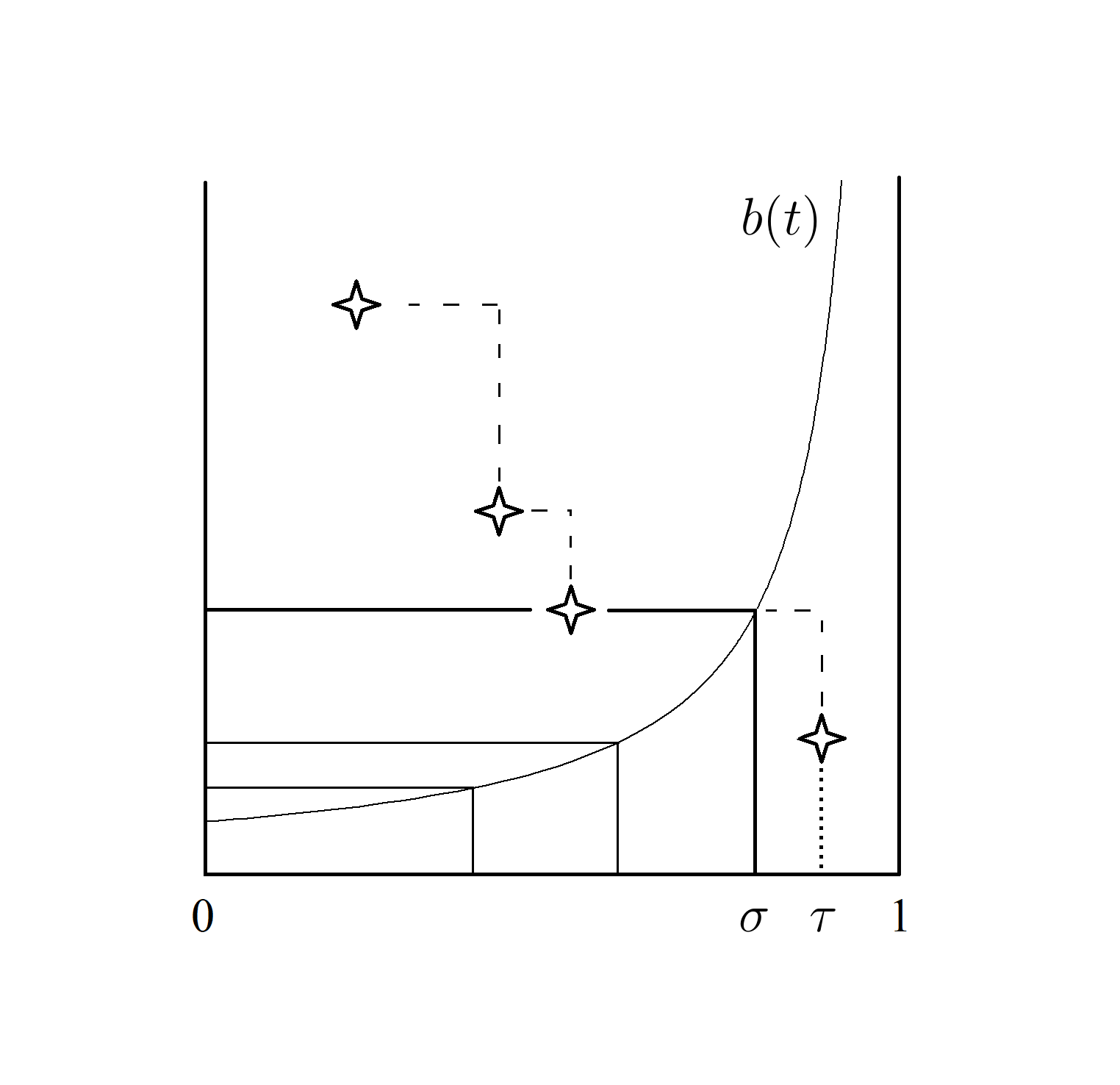}
		\end{minipage}
		\caption{Event $\sigma<\tau$ - the boundary $b(t)$ hit by a drift. Record caught by the northern side of a frame spanned on $(0,0)$ and $(t,b(t))$.} \label{fig:sigma_less_tau} 
	\end{center}  
\end{figure}

We aim to express the success probability of $\tau$ in terms of $(\sigma, Z_\sigma)$. A key observation which leads to explicit formulas is the 
self-similarity property: two boxes with the same area $z$ can be mapped 
to one another by a bijection which preserves both measure and coordinate-wise order.
Consider a box with apex at $(t,x)$ hence size $z=(1-t)x$. Then

\begin{itemize}

\item[(i)] if stopping occurs on a record at $(t,x)$  it is successful with probability $e^{-z}$,

\item[(ii)] if stopping occurs on a record  at $(t, Ux)$, for $U$ distributed uniformly on $[0,1]$,
 it is successful with probability 
$$J(z):=\int_0^1 e^{-zu}\,{\rm d}u= \frac{1-e^{-z}}{z},$$

\item[(iii)] if stopping occurs at the earliest arrival  inside the box (if any)
 it is successful with probability 
$$D(z):=\int_0^z   e^{-s}J(z-s)\,{\rm d}s= 
e^{-z}\int_0^z \frac{e^{s}-1}{s}{\rm d}s.$$
\end{itemize}
These formulas are most easily derived for {\it standard} box $[0,z]\times[0,1]$.

Now,  the running minimum crosses the boundary $b$ at time $[t,t+{\rm d}t]$ by jump 
(hence $\sigma=\tau$) if
 there are no Poisson
atoms south-west of the point $(t,b(t))$, and there is an arrival below $b(t)$.
Given such arrival, the distribution of the record value $Z_t$ is
uniform on $[0, b(t)]$, therefore  this crossing event contributes to the success probability
$$ e^{-t b(t)} b(t)\, J((1-t)b(t))\,{\rm d}t.$$
Alternatively, $(Z_t)$ drifts into the stopping domain at time $[t,t+{\rm d}t]$ (hence $\sigma<\tau)$ and $\tau$ wins with the next available record with probability
$$ e^{-t b(t)} t (b(t+{\rm d}t)-b(t)) D((1-t)b(t)).$$
We write   the success probability with $\tau$ as a functional of the boundary 

\begin{equation}\label{CV}
{\cal P}(b):= \int_0^1 e^{-t b(t)} b(t)\, J[(1-t)b(t)]\,{\rm d}t+\int_0^\infty 
e^{-t b(t)} t \, D[(1-t)b(t)] \,{\rm d}b(t).
\end{equation}
Note that the distribution of $\sigma$ is given by
$${\mathbb P}(\sigma\leq t)=\int_0^t e^{-s b(s)} b(s)\,{\rm d}s+\int_0^{b(t)} 
e^{-s b(s)} s \,  \,{\rm d}b(s),$$
where the terms correspond to two types of boundary crossing.

We may view maximising the functional
 (\ref{CV})
as a problem from the calculus of variations.
Recalling that the box area at record arrival is the only statistic which matters, suggests  to try the  hyperbolic shapes
\begin{equation}\label{hyperb}
b(t)=\frac{{\beta}}{(1-t)}.
\end{equation}
Indeed,  equating (i) and (iii), $e^{-z}=D(z)$, we see that the balance between immediate stopping and stopping at the next record is achieved
for $\beta^*=0.804352\cdots$ solving the equation 
$$ \int_0^z \frac{e^s-1}{s}{\rm d s}=1.$$

The optimal stopping time is defined by domain $B$ with the hyperbolic boundary and $\beta^*$ (see Figure~\ref{fig:boundary_rect}), because
$B$ is no-exit domain for the running minimum process, hence the monotone case of optimal stopping  \cite{CRS} applies.

\begin{figure}[!ht]
	\centering{
		\includegraphics[width = 0.5 \textwidth]{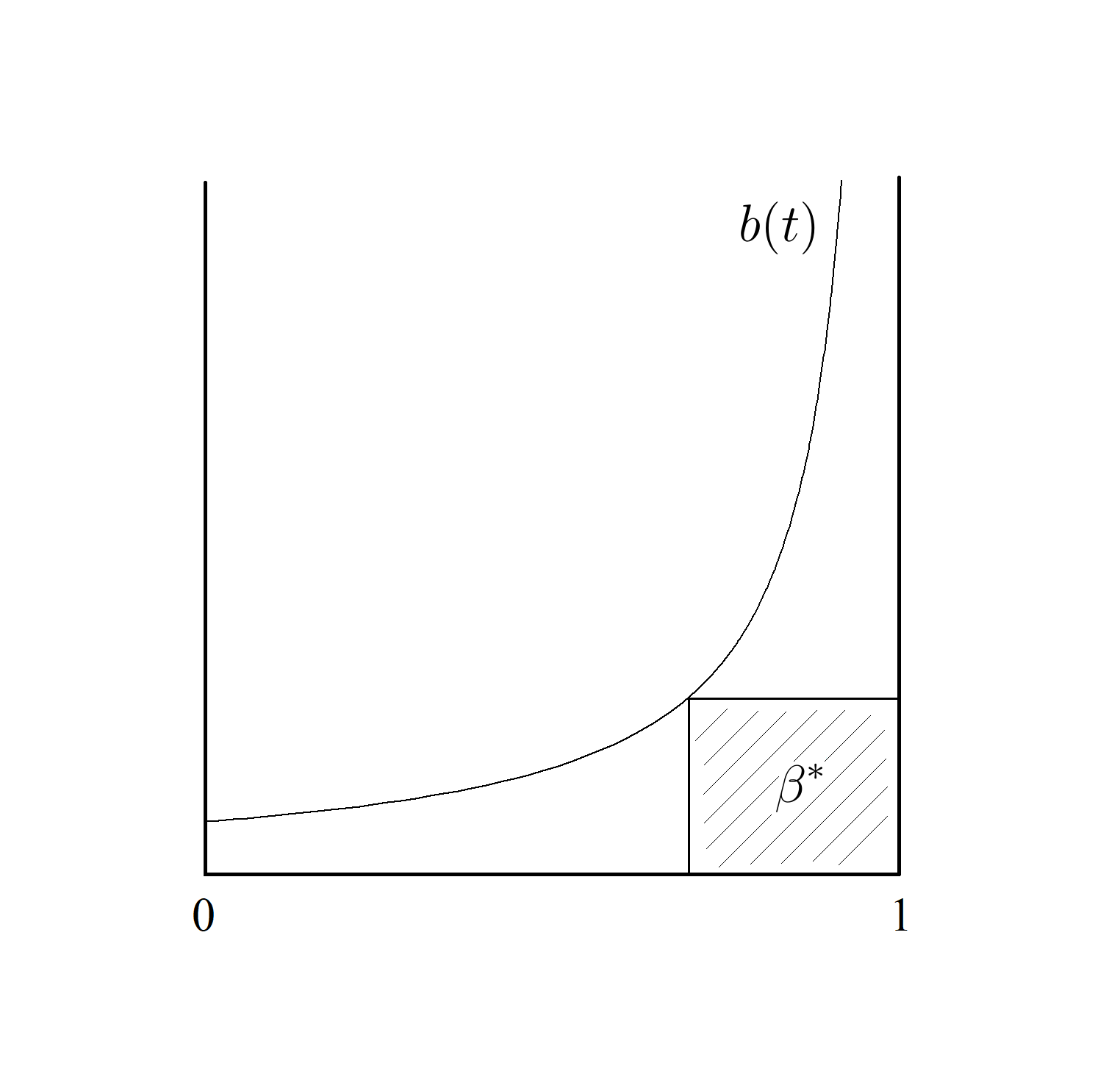}
		\caption{The hyperbolic boundary $b(t) = \frac{\beta^*}{1-t}$.} \label{fig:boundary_rect}
	}
\end{figure}

For  (\ref{hyperb}), the functional simplifies enormously, and the success probability becomes 
\[
\begin{split}
	{\cal P}(b) & =  J(\beta) \,{\mathbb P}(\sigma=\tau) +D(\beta) \,{\mathbb P}(\sigma<\tau)\\
	%J(\beta) \,{\mathbb P}(\sigma=\tau) +D(\beta)(1-{\mathbb P}(\sigma=\tau)=\\
	& = D(\beta)+(J(\beta)-D(\beta)){\mathbb P}(\sigma=\tau)\\
	& = D(\beta)     +(J(\beta)-D(\beta))   \left(\beta \,e^\beta \int_\beta^\infty \frac{e^{-z}}{z}{\rm d}z   \right).
\end{split}
\]
%\begin{eqnarray*}
%{\cal P}(b)=  J(\beta) \,{\mathbb P}(\sigma=\tau) +D(\beta) \,{\mathbb P}(\sigma<\tau)=\\
%%J(\beta) \,{\mathbb P}(\sigma=\tau) +D(\beta)(1-{\mathbb P}(\sigma=\tau)=\\
%D(\beta)+(J(\beta)-D(\beta)){\mathbb P}(\sigma=\tau)=\\
%D(\beta)     +(J(\beta)-D(\beta))   \left(\beta \,e^\beta \int_\beta^\infty \frac{e^{-z}}{z}{\rm d}z   \right).
%\end{eqnarray*}
Finally, for the optimal $\beta^*$ we get with the account of $D(\beta^*)=\exp(-\beta^*)$

$$\sup_b{\cal P}(b) =e^{-\beta^*} +(e^{\beta^*}-1- \beta^*) \int_{\beta^*}^\infty \frac{e^{-s}}{s}{\rm d}s  =0.580164\cdots $$
which is the formula due to Samuels \cite{Handbook}.

Historically, the first study of the full-information best-choice problem with arrivals by Poisson process was Sakaguchi  \cite{SakaguchiFI}.
In that paper the marks are 
 uniform-$[0,1]$ and the process runs with finite horizon $T$. To obtain a sensible $T\to\infty$ limit one needs to 
resort to the equivalent model of Poisson process in $[0,1]\times[0,T]$. 
The finite-$T$ problem can then be interpreted as $N$ conditioned on the initial record at point $(0, T)$, then 
for $T\geq \beta^*$
the optimal success probability is  given by the above formula but with the upper limit $T$  in the exponential integral.

\section {A  uniform  triangular model}

In the models of this section the background process lives in the domain $x\geq t$. These have some appeal 
for applications in scheduling, where interval $[t,x]$ represents the time span needed to process a job by a server, and exactly one job is to be chosen by a stopping strategy.
The optimisation task is to maximise the probability of choosing the job 
with the earliest completion time.

%A job occupies time interval spanned on two points sampled uniformly from $[0,1]$. 
%The generic job is characterised by $(t, x)$, start and finish time $t<x$, with duration $x-t$. One %needs to choose only one job as it arrives, to send it to the processor.  The point process occurring %here may be rescaled so that it weakly converges to a unit rate Poisson process in the domain $\%{(t,x): 0 \leq t \leq x < \infty \}$. Below we give a complete solution to such problems and state %further examples.
%Again, the point process $(t \sqrt{2 n}, x \sqrt{2 n})$ will give in the limit the poissonised %triangular model.

\subsection{The discrete time problem}

Let $X_1,\cdots,X_n$ be independent, with $X_j$ having discrete  uniform distribution  on $\{j,\cdots,n\}$. 
Obviously, we  may focus on the states of  the running minimum within the lattice domain $\{(j,x): j\leq x\leq n\}$.

By Proposition \ref{as2} the optimal stopping time is given by a set  of  nondecreasing thresholds $b_j$.
Stopping at  record $(j,x)^\circ$ is successful with probability
\begin{equation}\label{s-jx}
s(j,x)= \prod_{i=0}^{x-j-1} \frac{n-x+1}{n-j-i}.
\end{equation}
Given the running minimum $M_j=x$ with $x\leq b_j$,  the continuation value 
is a specialisation of (\ref{v-in-B}), assuming the form

\begin{equation}\label{v-jx}
v(j,x) = \sum_{i=1}^{x-j} \left( \left(\prod_{k=0}^{i-2} \frac{n-x}{n-j-k} \right) \cdot \frac{1}{n-j-i+1} \cdot \sum_{y=j+i}^{x} s(j+i,y)\right).
\end{equation}
The success probability splits in two components, $v_n=J_n+D_n$, where $J_n$ results from  the running minimum breaking into $B$ by jump, 
while $D_n$ relates to drifting into $B$.
Explicitly,
$$J_n = \sum_{j=1}^{n} \left( \left( \prod_{k=0}^{j-2} \frac{n-b_j}{n-k} \right) \cdot \frac{1}{n-j+1} \cdot \sum_{x=j}^{b_j} s(j,x) \right)$$
and
$$D_n = \sum_{j=2}^{n}\left( \left( \prod_{i=0}^{j-2} \frac{n-b_{j-1}}{n-i} - \prod_{i=0}^{j-2} \frac{n-b_j}{n-i} \right) \cdot \sum_{y=b_{j-1}+1}^{b_j} \frac{v(j,y)}{b_j-b_{j-1}} \right),$$ 
  $b_j$ is the biggest $x$ with $s(j,x)\geq v(j,x)$ and the latter are given by  (\ref{s-jx}) and (\ref{v-jx}).

The computed values plotted in Figure \ref{fig:probs_triang_n} suggest that $v_n$ monotonically decreases  to a limit $0.703128\cdots$ (check the next subsection for its exact derivation).

\begin{figure}[!ht]
	\centering{
		\includegraphics[width = 0.6 \textwidth]{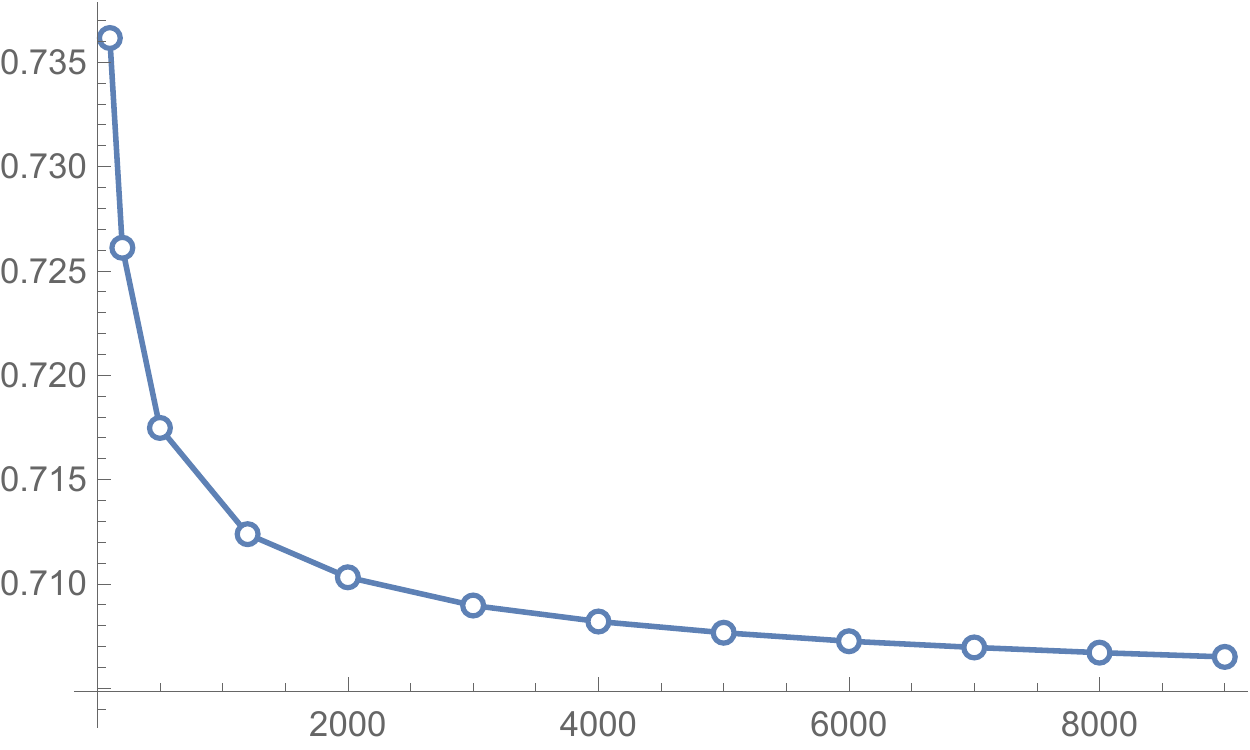}
		\caption{The optimal best-choice probability $v_n$  in the discrete triangular model for $n~ \in~\{100,\cdots,9000\}$.} \label{fig:probs_triang_n}
	}
\end{figure}

\subsection{Poissonisation}  \label{sec:triang}

The right scaling  is guessed from the Rayleigh distribution limit 
\begin{equation}\label{avoidance}
	\prob(M_n> x\sqrt{n})=\prod_{j \leq x \sqrt n}\left(1-\frac{j}{n-\lfloor x \sqrt{n} \rfloor +j}\right)\to e^{-x^2/2}, ~x>0.
\end{equation}
%\begin{equation}\label{avoidance}
%\prob(M_n> x\sqrt{n})=\prod_{\{j:\leq x \sqrt n\}}\left(1-\frac{j}{n}\right)\to e^{-x^2/2}, ~x>0
%\end{equation}
We truncated the product since
  $X_j>x \sqrt{n}$ for $j>x \sqrt{n}$. Thus we define  $N_n$  to be the point process with atoms $(j/\sqrt{n}, X_j/\sqrt{n}), ~j\leq n.$

Now, we assert that the process $N_n$ converges in distribution  to a  Poisson process $N$ with unit rate
in the sector $\{(t,x): 0\leq t\leq x<\infty\}$.  A pathway to proving this is the following.
For 
 $x>0$, 
convergence of the reduced point process of scaled times $\{j/\sqrt{n}: X_j\leq x\sqrt{n}\}$ is established in line with Chapter 9 of  \cite{Falk}: this includes convergence
of the mean measure and the avoidance probabilities akin to  (\ref{avoidance}) with $j$ in the bounds
 $t_1<j/\sqrt{n}<t_2$. 
Then the convergence of the planar process $N_n$ restricted to $[0,x]\times [0,x]$ follows by application of the theorem about marked Poisson processes. Sending $x\to\infty$ completes the argument.

\begin{figure}[!ht]
	\centering{
		\includegraphics[width = 0.4 \textwidth]{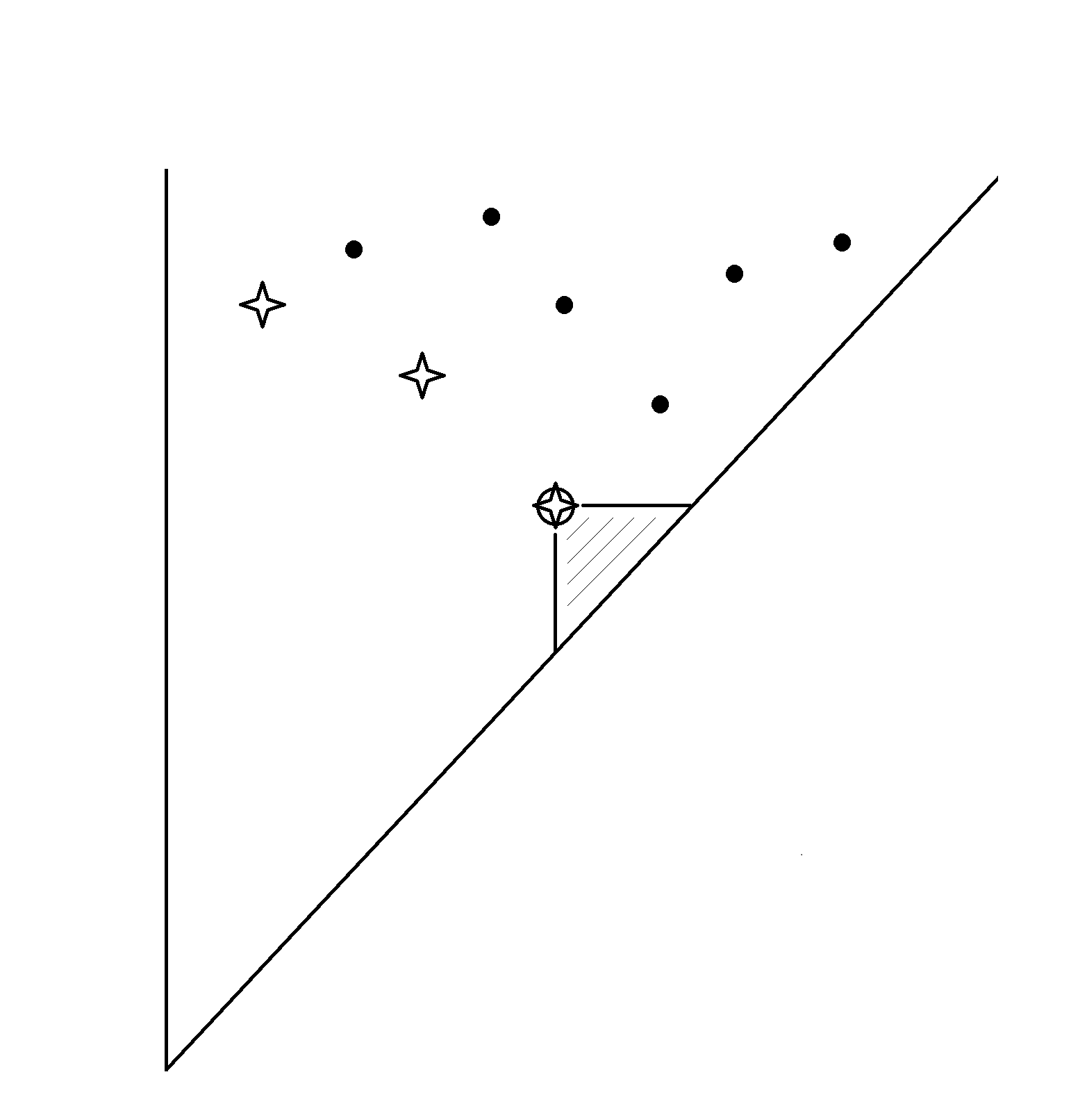}
		\caption[]{Records in a planar Poisson process above the diagonal $t=x$ of the positive quadrant; 		\scalerel*{\includegraphics{star_1.png}}{B} - record (no Poisson atoms south-west of it), 
		\scalerel*{\includegraphics{star_2.png}}{B} - the last record (its box contains no Poisson atoms).} \label{fig:record_triang}
	}

\end{figure}

The best-choice problem for $N$ is very similar to the one in the previous section. Under a box  with apex $(t,x)$ we shall understand now 
the isosceles triangle with one side lying on the diagonal and two other sides being parallel to the coordinate axis. 
The box area is equal to  $z:=(x-t)^2/{2}$.
Equal-sized  boxes can be mapped to one another by sliding along the diagonal (see Figure~\ref{fig:boundary_triang}).

\begin{figure}[!ht]
	\centering{
		\includegraphics[width = 0.47 \textwidth]{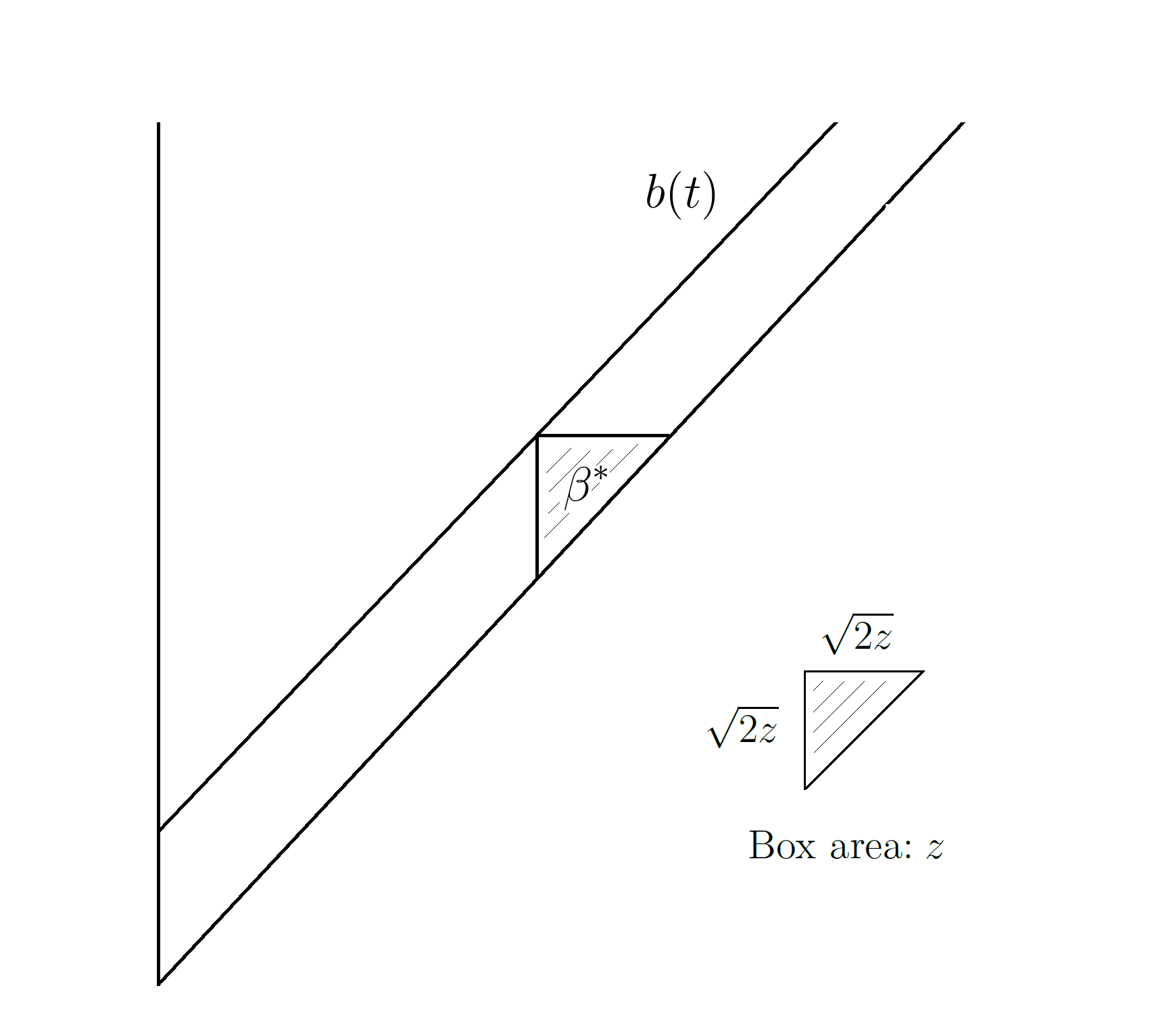}
		\caption{The linear boundary $b(t) = t + \sqrt{2 \beta^*}$.} \label{fig:boundary_triang}
	}
\end{figure}

In these terms, the basic functions are defined as follows: 

\begin{itemize}

\item[(i)] if stopping occurs on a record at $(t,x)$  it is successful with probability $e^{-z}$,

\item[(ii)] if stopping occurs on a record  at  random location $(t, t+U(x-t))$, for $U$ distributed uniformly on $[0,1]$,
 it is successful with probability 
\begin{eqnarray*}
J(z):=\int_0^1 e^{-z u^2}\,{\rm d}u= \frac{\sqrt{\pi}\erf(\sqrt{z})}{2\sqrt{z}}
\end{eqnarray*}
(recall that $\erf(x) = \frac{2}{\sqrt{\pi}} \int_0^x e^{-t^2} \dt$),

\item[(iii)] if stopping occurs on the earliest arrival  inside the box (if any)
 it is successful with probability 
\begin{eqnarray*}
D(z)&:=& \int_0^{\sqrt{2z}}   \exp\left( -\sqrt{2z}\,s+s^2/2  \right)   (\sqrt{2z}-s)     J\left( \left(\sqrt{z}-s/\sqrt{2}\right)^2  \right)\,{\rm d}s\\
&=&
e^{-z} \int_0^{\sqrt{2z}}\int_0^u e^{(u^2-v^2)/2}{\rm d}v{\rm d}u.
\end{eqnarray*}
\end{itemize}

The boundaries that come in question in this problem are non-decreasing functions $b:[0,\infty)\to[0,\infty)$ that satisfy $b(t)\geq t$. The  analogue of (\ref{CV}) becomes 

\begin{eqnarray*}
{\cal P}(b)&:=& \int_0^\infty  e^{-t b(t)+t^2/2}(b(t)-t)  J[(b(t)-t)^2/{2}]{\rm d} t  \\
&+&  \int_0^\infty  e^{-t b(t)+t^2/2}t   D[(b(t)-t)^2/{2}]{\rm d}b(t). 
\end{eqnarray*}
In the view of  self-similarity the maximiser should be a linear function
$$ b(t)=t+\sqrt{2\beta},$$
and then the success probability simplifies as 
\[
\begin{split}
	{\cal P}(b) & = D(\beta)+(J(\beta)-D(\beta)){\mathbb P}(\sigma=\tau)\\
	& = D(\beta)+(J(\beta)-D(\beta))    \sqrt{2\beta}  \int_0^\infty \exp\left({-\frac{t^2}{2}-\sqrt{2\beta} \,\,t}\right)\,{\rm d}t\\
	& = D(\beta)+(J(\beta)-D(\beta)) \sqrt{\pi\beta}\, e^\beta\, \erfc(\sqrt{\beta})
\end{split}
\]
(recall that $\erfc(x) = \frac{2}{\sqrt{\pi}} \int_x^\infty e^{-t^2} \dt$).
%\begin{eqnarray*}
%{\cal P}(b)= D(\beta)+(J(\beta)-D(\beta)){\mathbb P}(\sigma=\tau)=\\
%D(\beta)+(J(\beta)-D(\beta))    \sqrt{2\beta}  \int_0^\infty \exp\left({-\frac{t^2}{2}-\sqrt{2\beta} \,\,t}\right)\,{\rm d}t=\\
%D(\beta)+(J(\beta)-D(\beta)) \sqrt{\pi\beta}\, e^\beta\, \erfc(\sqrt{\beta}).
%\end{eqnarray*}
Equation $e^{-z}=D(z)$ becomes
$$\int_0^{\sqrt{2z}}\int_0^u e^{(u^2-v^2)/2}{\rm d}v{\rm d}u=1,$$
which by monotonicity has a unique solution
$$\beta^*=  0.760660\cdots$$
%$$\beta^*=  0.7606604964066086\cdots$$ 
The stopping strategy with boundary $b(t)=t+\sqrt{2\beta^*}$ is overall optimal, and yields 
 the success probability
\begin{eqnarray} \label{OPT}
\sup_b {\cal P}(b)&=& e^{-\beta^*}+\left( e^{\beta^*} \frac{\sqrt{\pi}}{2\sqrt{\beta^*}} \erf\left(\sqrt{\beta^*}\right)-1   \right)\sqrt{\pi\beta^*} \erfc\left(\sqrt{\beta^*}\right)
\\
\nonumber
&=&0.703128\cdots,
%&=&0.70312840371201\cdots,
\end{eqnarray}
confirming the limit we obtained numerically.

\subsection{The box-area jump chain and extensions}

This limit (\ref{OPT})  has appeared previously  in a context of  generalised records 
from partially ordered data \cite{Stoch}.  The source of coincidence lies in the  structure of the one-dimensional  box area process associated with the running minimum.
This is an interesting connection deserving some comments.

Consider a piecewise deterministic, decreasing Markov process  $P$  on ${\mathbb R}_+$, which drifts to zero at unit speed and jumps at unit rate.
When the jump occurs from location $z$, the new state is $zY$, where $Y$ is random variable with given distribution on $(0,1)$. The state $0$ is terminal. 
Thus if $P$ starts from $z>0$, in one drift-and-jump cycle the process moves to $(z-E)_+Y$, where $E$ is independent exponential random variable.
The associated optimisation problem amounts to stopping at the last state before absorption.

A process of this kind describes  a time-changed box area associated with the running minimum.
For the Poisson process of Section \ref{sec:GMfull},  the variable $Y$ is uniform-$[0,1]$, and in the triangular model it is beta$(1/2,1)$.
Two different modes of the first passage by the running minimum  occur when $P$ enters  $[0,\beta^*]$ by drift or by jump, where $\beta^*$ is the optimal parameter of the 
boundary.

More generally, for  
$Y$ following beta $(\theta,1)$ distribution,
Equation (9)   from \cite{Stoch} gives the success probability as
\begin{equation}\label{genTh}
{\cal P}(\beta^*) = \Gamma(-\theta+1,\beta^*,\infty) \left(-{\beta^*}^\theta\ + e^{\beta^*} \theta \Gamma(\theta,0,\beta^*)\right) + e^{-\beta^*},
\end{equation}
where
$$
\Gamma(a,b,c) = \int_{b}^{c} e^{-t}t^{a-1} \dt.
$$
%Numerical values of $\beta^*$ and ${\cal P}(\beta^*)$ for some values of $\theta$ (in particular for $1$ and $1/2$) one finds in \cite{Stoch} in table on page 8. 
%We verified  analytically that the formula agrees with our (\ref{OPT}) for $\theta=1/2$.
One can verify  analytically that for $\theta=1/2$ the formula agrees with our (\ref{OPT}). Indeed,  (\ref{genTh})  specialises as 
\begin{equation}\label{th_half}
{\cal P}(\beta^*) = \Gamma(1/2,\beta^*,\infty) \left(-{\sqrt{\beta^*}} + \frac{e^{\beta^*}}{2} \Gamma(1/2,0,\beta^*)\right) + e^{-\beta^*},
\end{equation}
so observing
\begin{equation}\label{gamma_1}
\Gamma\left(1/2, \beta^{*}, \infty\right) = \int_{\beta^{*}}^{\infty}e^{-t}t^{-1/2} \dt = 2 \int_{\sqrt{\beta^{*}}}^{\infty} e^{-x^2} \dx = \sqrt\pi \erfc\left(\sqrt{\beta^{*}}\right)
\end{equation}
and similarly
\begin{equation}\label{gamma_2}
\Gamma\left(1/2, 0,  \beta^{*}\right) = \sqrt{\pi} \erf\left(\sqrt{\beta^{*}}\right)
\end{equation}
we obtain   (\ref{OPT}) by
substituting (\ref{gamma_1}) and (\ref{gamma_2}) into (\ref{th_half})

\vskip0.3cm

We also considered other processes of independent observations with linear trend that give the same limit best-choice probability (\ref{OPT}):
\begin{itemize}
\item[(i)] $X_1, \cdots, X_n$  independent, with $X_j$ distributed uniformly on $\{j,\cdots, j+n-1\}$.\\ 
Here again the limit distribution is Rayleigh, $\prob[M_n > x \sqrt{n}] \rightarrow e^{-x^2/2}$, 
and the point process with atoms $(j/\sqrt{n}, X_j/\sqrt{n})$ converges in distribution to a  Poisson process $N$ with unit rate in the  sector $\{(t,x): 0\leq t\leq x<\infty\}$.
\item[(ii)] $X_j=j+\rho n U_j$, where $\rho>0$ is a parameter and $U_1, U_2,\cdots$ are iid uniform-$[0,1]$.\\
This time $\prob[M_n > x \sqrt{\rho n}] \rightarrow e^{-x^2/2}$ and the point process with atoms  $(j/\sqrt{\rho n}, X_j/\sqrt{\rho n})$ converges weakly to  the same Poisson $N$.
\end{itemize}
On the other hand, 
\begin{itemize}
\item[(iii)] $X_j=j+ n U_j^{1/\theta}$, where $\theta>0$ is a parameter and $U_1, U_2,\cdots$ are iid uniform-$[0,1]$,\\
leads to (\ref{genTh}). Here   $\prob[M_n > x~n^\frac{\theta}{\theta +1}] \rightarrow e^{-\frac{x^{(\theta+1)}}{\theta+1}}$ which is a Weibull distribution with shape parameter $(\theta+1)$ and scale parameter $(\theta +1)^{\frac{1}{\theta +1}}$. The point process with atoms $(j/n^{\frac{\theta}{\theta +1}}, X_j/n^{\frac{\theta}{\theta +1}})$ converges weakly to the Poisson process which is not homogeneous, rather has intensity measure $\theta(x-t)^{\theta-1}{\rm d}t{\rm d}x, ~~~0\leq t\leq x$.
\end{itemize}

\section{A uniform rectangular model}

According to Proposition \ref{as_bounds_iid}, the limit best choice probability  for iid observations  is $\underline{v}$, provided the probability of a tie for the sample minimum approaches $0$ as $n\to\infty$.
For fixed,  not depending on $n$, 
discrete  distribution
this may or may not be the case. Moreover, when $(-X_j)$'s are geometric, the probability of a tie does not converge, but undergoes tiny fluctuations 
 \cite{Prodinger}; in this setting one can expect that the best choice probability has no limit  as well.
In this section we consider a discrete uniform distribution, and  achieve a positive limit probability of a tie for the sample minimum by letting the support of the distribution to depend on $n$.

\subsection{The discrete time problem} \label{sec:rect_disc}

Let $X_1, \cdots, X_n$ be independent, all distributed uniformly on $\{1,\cdots,n\}$. The generic state of the running minimum is a pair $(j,x)$, where $j,x \in \{1,\cdots,n\}$. In this setting the probability of a tie for a particular value does not go to $0$ with $n \to \infty$. In particular, the number of $1$'s in the sequence of $n$ observations is Binomial$(n,1/n)$, hence approaching  the Poisson$(1)$ distribution,
so the strategy which just waits for the first $1$ to appear succeeds with probability $1-(1-1/n)^n\to 1-1/e = 0.632120\cdots$, which already exceeds noticeably the universal sharp bound $\underline{v}~=~0.580164\cdots$ of Proposition \ref{as_bounds_iid}.

Again, by Proposition \ref{as2} the optimal stopping time is determined by a set of nondecreasing thresholds $b_1 \leq \cdots \leq b_n = \infty$. Stopping at record $(j,x)$ is successful with probability
$$
s(j,x) = \left(\frac{n-x+1}{n}\right)^{n-t}.
$$
Conditionally on the running minimum $M_j=x$ with $x<b_j$, the continuation value given by~(\ref{v-in-B}) reads as
$$
v(j,x) = \sum_{i=1}^{n-t}  \left(\frac{n-x}{n}\right)^{i-1} \frac{1}{n} \sum_{y=1}^x  s(t+i,y).
$$

%Recall that in this case $X_1, \cdots, X_n$ are distributed uniformly on $\{1,2,\cdots,n\}$ and one wants to maximise the probability of stopping at any minimal sample value. {\color{red} The proof of the fact that this problem can be approximated in the limit by the poissonised version from Section \ref{sec:poiss_rect} is analogous to the triangular case}. Below (analogously to what was done in the discrete triangular model, Section \ref{sec:disc_triang_succ}) we derive the success probability formula for a given $n$ for a strategy with nondecreasing thresholds. %This time we present also the success probability formula for a given $n$.

%\subsubsection{The success probability with nondecreasing thresholds}
%\subsubsection{The success probability}
%Consider stopping at minimum of the sequence of length $n$ with elements sampled from the uniform-$[n]$ distribution.
%Again, $t$ and $x$ are the time and value variables, respectively, both with range $\{1,\cdots,n\}$ and $(t,x)$ is the generic state of the running minimum process. This time stopping at the record $(t,x)$ is successful with probability $$p_J(t,x)= \left(\frac{n-x+1}{n}\right)^{n-t}.$$
%Stopping at the next record from state $(t,x)$ on is successful with probability
%$$p_D(t,x)=\sum_{i=1}^{n-t}  \left(\frac{n-x}{n}\right)^{i-1} \frac{1}{n} \sum_{y=1}^x  p_J(t+i,y).$$

\begin{figure}[!ht]
	\centering{
		\includegraphics[width = 0.6 \textwidth]{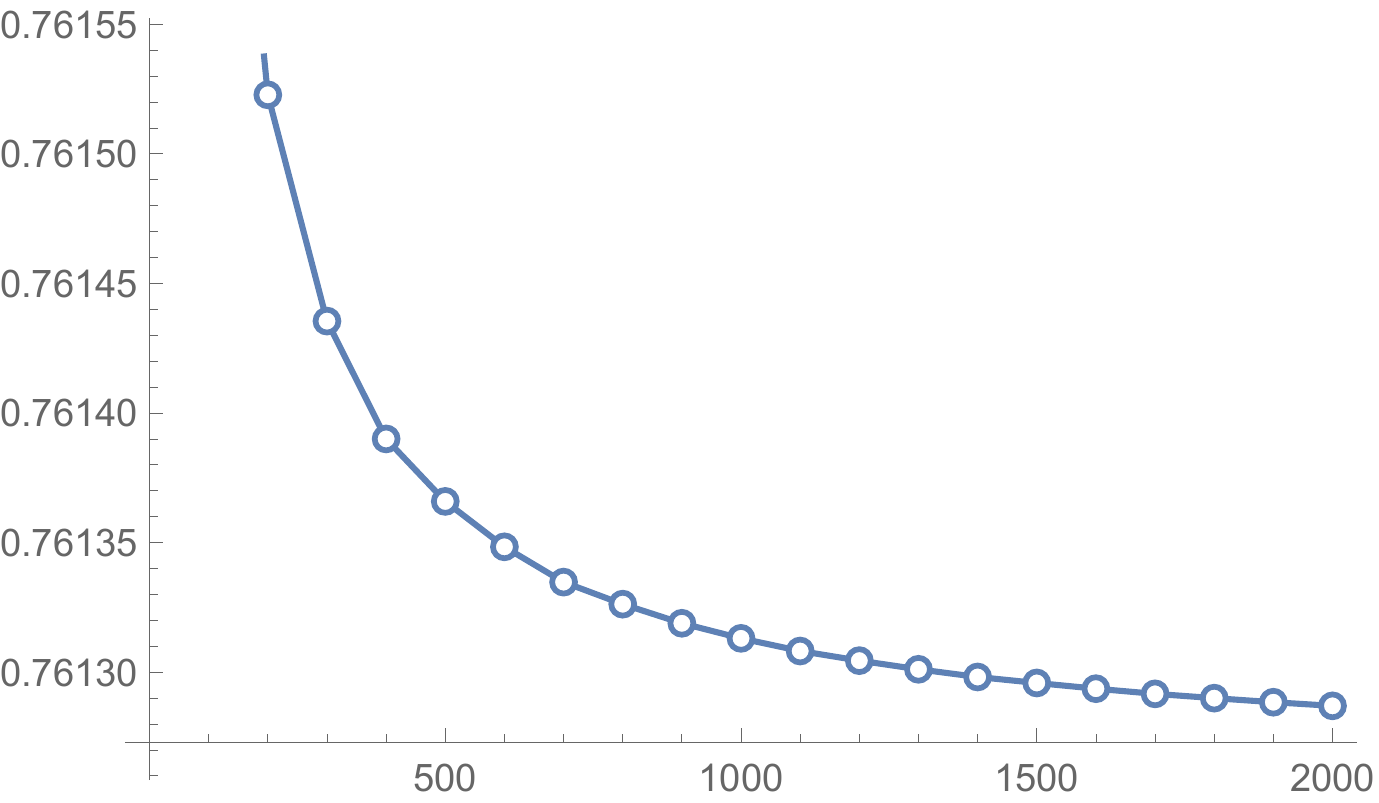}
		\caption{The optimal best-choice probabilities in the discrete rectangular model for $n \in\{100,200,\cdots,2000\}$.} \label{fig:probs_rect_n}
	}
\end{figure}

%The optimal thresholds are given by $d^*(t) = \max\{x:p_J(t,x) \geq p_D(t,x)\}$. But for any nondecreasing thresholds $d(t)$ we may write
The success probability may be again decomposed into terms $v_n = J_n + D_n$, referring to the running minimum entering  $B$ by jump or by drift, respectively. We get
$$
J_n=\sum_{j=1}^n \left(\frac{n-b_j}{n}\right)^{j-1} \frac{1}{n}\sum_{x=1}^{b_j}s(j,x),
$$
and
$$
D_n=\sum_{j=2}^n\left[\left(\frac{n-b_{j-1}}{n}\right)^{j-1} -  \left(\frac{n-b_j}{n}\right)^{j-1} \right] \sum_{y=b_{j-1}+1}^{b_j}   \frac{v(j,y)}{b_j-b_{j-1}},
$$ 
where $b_j$ is defined as the biggest $x$ with $s(j,x) \geq v(j,x)$, and these are given by  (\ref{s-jx}) and (\ref{v-jx}).
%One can expect that for the optimal strategy, typically, %(if not always)
%$d^*(t)-d^*(t-1)\leq 1$, 
%that is the difference of successive thresholds is either $0$ or $1$. In that case 
%$$ \sum_{y=d(t-1)+1}^{d(t)}   \frac{p_D(t,y)}{d(t)-d(t-1)}$$ 
%is either $0$ or simplifies to $p_J(t, d(t))$.
The computed values, as  presented in Figure \ref{fig:probs_triang_n}, suggest that $v_n$  decreases monotonically to a limit $0.761260\cdots$.
Using the  Poisson approximation we shall obtain an explicit expression in terms of the roots of certain equations.

%The decreasing character of the success probability is shown Figure \ref{fig:probs_rect_n} to a limit $0.761260\cdots$ (check the next subsection for its exact derivation).

\subsection{Poissonisation} \label{sec:poiss_rect}

The point process $(j/n, X_n)$ converges to a
Poisson process on $[0,1]\times {\mathbb Z}_{>0}$ with the intensity measure being the product of Lebesgue measure and the counting measure on integers.
Hence to find the limit success probability we may work directly with the setting of this Poisson process. We prefer, however, to stay within the  continuous framework of  previous sections, and to work with the planar Poisson point process $N$ in $[0,1]\times[0,\infty)$ with the Lebesgue measure as intensity.

To that end, we just modify the ranking order.
Let $X_1, \cdots, X_n$ be iid uniform-$[0,n]$. Two values with $\lceil X_j\rceil = \lceil X_i \rceil$ will be treated as order-indistinguishable. In particular, we call $X_j$ a (weak) record if $\lceil X_j\rceil \leq \lceil X_i\rceil$ for all $i<j$. For $n$ large, the distribution of $\lceil M_n \rceil$ is close to Geometric$(1-1/e)$.

Now, the planar point process with atoms $(j/n, X_j)$, $j \leq n$, converges in distribution to $N$. The running minimum $(Z_t, t \in [0,1)])$ is the lowest mark of arrival on $[0,t]$. Marks $x,y$ with the same integer ceiling $\lceil x\rceil= \lceil y \rceil$ will be considered as order-indistinguishable.
Accordingly, arrival $(t,x)$ is said to be a (weak) record if $[0,t]\times [0,\lceil x\rceil)$ contains no Poisson atoms.
The role of a box is now played by the rectangle  $[t,1]\times [0,\lceil x\rceil)$.

The basic functions are defined as follows:
\begin{itemize}
	
	\item[(i)] if stopping occurs on a record at $(t,x)$  with $\lceil x\rceil=k$ it is successful with probability $$\exp(-(1-t)(k-1)),$$
	
	\item[(ii)] if stopping occurs on the earliest arrival (if any) inside the box $[t,1]\times [0,\lceil x\rceil)$ with $\lceil x\rceil=k$ it is successful with probability
	\begin{eqnarray*}
		\int_{t}^{1} e^{-k (s-t)} \sum_{j=1}^k e^{-(j-1)(1-s)} {\rm d}s=
		e^{-k(1-t)} \sum_{j=1}^k \frac{e^{j(1-t)}-1}{j}.
	\end{eqnarray*}

\end{itemize}

For $k=1$ stopping is optimal for all $t$; we set $t_1=0, z_1=e$ and for $k\geq 2$ the equality is achieved for $t_k$  defined to be the root of equation 
$$ e^{-(k-1)(1-t)}=e^{-k(1-t)}\sum_{j=1}^k  \frac{e^{j(1-t)}-1}{j}.$$
Letting $z_k:=e^{1-t_k}$, $z_k$ is a solution to

\begin{equation}\label{EqZ}
	\sum_{j=2}^k \frac{z^j}{j}=h_k, ~~~h_k:=\sum_{j=1}^k \frac{1}{j}.
\end{equation}
By monotonicity there exists a unique positive solution, and the roots are decreasing, so that  $z_1=e$ and  $z_k\downarrow 1$.

It follows that the optimal stopping time is
$$\tau=\inf\{t:~(t,Z_t)~{\rm is ~a~record}, {\rm ~such~that~} t \geq  1-\log(   z_{k})~{\rm for~}k=\lceil Z_t\rceil\}.$$ 
That is, the stopping boundary is
$$b(t) =\sum_{k=1}^{\infty} k \,1(t_k<t\leq t_{k+1})= \sum_{k=1}^{\infty} k \,\,1(z_{k+1}\leq e^{1-t} < z_k).$$
The cutoffs are  readily computable from (\ref{EqZ}), for instance
\begin{eqnarray*}
	z_1=e=2.71828\cdots,\\
	z_2=\sqrt{3}=1.732050\cdots,\\
	z_3= 1.381554\cdots,\\
	z_4=1.258476\cdots,\\
	z_5=1.195517\cdots,\\
	z_{10}= 1.088218\cdots,\\
	z_{15}=1.056969\cdots,\\
	z_{20}=1.042069\cdots.
\end{eqnarray*}

The associated hitting time for the running minimum is
$$\sigma=\inf\{t: \lceil Z_t\rceil\leq b(t)\}.$$

The success probability again decomposes in terms corresponding to the events $\sigma<\tau$ and $\sigma=\tau$. The first term related to jump through the boundary becomes

\[
\begin{split}
	J & :=\sum_{k=1}^\infty     \int_{t_k}^{t_{k+1}} e^{-k t} \sum_{j=1}^k e^{-(j-1)(1-t)}{\rm d} t \\
	& = \sum_{k=1}^\infty\sum_{j=1}^k \frac{e^{-k}}{j}\, (e^{j(1-t_k)}-e^{j(1-t_{k+1})}) \\
	& = \sum_{k=1}^\infty \sum_{j=1}^k  \frac{e^{-k}}{j} (z_k^j-z_{k+1}^j)\\
	& = e^{-1}(z_1-z_2)+\sum_{k=2}^\infty e^{-k} \left(z_k-z_{k+1} +\frac{z_{k+1}^{k+1}-1}{k+1}\right),
\end{split}
\]

where for the last equality we used (\ref{EqZ})  in the form
\begin{eqnarray*}
	\sum_{j=1}^k \frac{z_{k}^j}{j}&=&h_k+z_k,\\
	\sum_{j=1}^k \frac{z_{k+1}^j}{j}&=&h_{k+1} +z_{k+1}  -\frac{z_{k+1}^{k+1}}{k+1}.  
\end{eqnarray*}

Note that if the ceiling of the running minimum $\lceil Z\rceil$ drifts into the boundary point $(t_k, k)$, the optimal success probability from this time on is the same as from stopping as if 
a record occurred at $(t_k,k)$. Hence the contribution of the event $\sigma<\tau$ becomes

\begin{eqnarray*}
	D:=    \sum_{k=2}^\infty (e^{-(k-1)t_k} - e^{-k t_k})e^{-(k-1)(1-t_k)} =\sum_{k=2}^\infty  e^{-k} (e-z_k).
\end{eqnarray*}

Putting the parts together, the optimal success probability after some cancellation and series work becomes
\[
\begin{split}
	{\mathbb P}(Z_\tau =\min_{t\in[0,1]} Z_t) & =J+D \\
	& = 2 -e +\frac{1}{e-1} +\frac{1-2\sqrt{3}}{2e} +e\log(e-1)
	-\sum_{k=2}^\infty e^{-k}\left(z_{k+1}- \frac{z_{k+1}^{k+1}}{k+1}\right)\\
	& = 0.761260\cdots
\end{split}
\]
%\begin{eqnarray*}
%	{\mathbb P}(Z_\tau=\min_{t\in[0,1]} Z_t)=J+D=\\
%	2 -e +\frac{1}{e-1} +\frac{1-2\sqrt{3}}{2e} +e\log(e-1)\\
%	-\sum_{k=2}^\infty e^{-k}\left(z_{k+1}- \frac{z_{k+1}^{k+1}}{k+1}\right)=\\
%	0.761260\cdots
%	%0.761260691\cdots
%\end{eqnarray*}

\paragraph{The general boundary} For the general boundary defined by nondecreasing cutoffs $t_k$, 
the jump term 
$$
J:=\sum_{k=1}^\infty \sum_{j=1}^k  \frac{e^{-k}}{j} (z_k^j-z_{k+1}^j)
$$
should be computed with $z_k=e^{1-t_k}$, and
the drift term  written as

\[
\begin{split}
	D & :=    \sum_{k=2}^\infty (e^{-(k-1)t_k} - e^{-k t_k})         e^{-k(1-t_k)}\sum_{j=1}^k \frac{e^{j(1-t_k)}-1}{j} \\
	& = \sum_{k=2}^\infty e^{-k} (e^{t_k} - 1)   \sum_{j=1}^k \frac{e^{j(1-t_k)}-1}{j} \,.
\end{split}
\]
%\begin{eqnarray*} 
%	D:=    \sum_{k=2}^\infty (e^{-(k-1)t_k} - e^{-k t_k})         e^{-k(1-t_k)}\sum_{j=1}^k \frac{e^{j(1-t_k)}-1}{j} =\\
%	\sum_{k=2}^\infty e^{-k} (e^{t_k} - 1)   \sum_{j=1}^k \frac{e^{j(1-t_k)}-1}{j}
%	\,.           
%\end{eqnarray*}
For instance, letting $t_k=1$ for $k \geq 3$, the maximum success probability is  $0.730694\cdots$ achieved at $t_2=0.450694\cdots$.

%\paragraph{Changing intensity} It is of interest to replace the unit PP by a process with other intensity $\lambda$. As $\lambda\to 0$ the probability of a tie for particular value $1-e^{-\lambda} -\lambda e^{-\lambda}$ will go to $0$. Hence the best-choice probability should approach $0.580\cdots$ from continuous case.

\subsection{Varying the intensity of  the Poisson process}
%\subsection{Changing intensity of a Poisson process to $\lambda$}
The extension presented in this section constitutes a smooth transition between the above poissonised rectangular model and the full-information game from Section \ref{sec:GMfull}. 
As above, consider a homogeneous  Poisson process on $[0,1]\times[0,\infty)$, and treat   values $x,y$ with $\lceil x\rceil= \lceil y \rceil$ as order-indistinguishable,
but now suppose the intensity of the process is  some $\lambda>0$. 
Note that as $\lambda\to 0$ the ties vanish 
% particular value is $1-e^{-\lambda} -\lambda e^{-\lambda}\to0$, 
hence the best-choice probability becomes close to $\underline{v} = 0.580164\cdots$ from the full-information game.

This process relates to a limit form of the  discrete time best-choice problem, with observations 
$X_1, \cdots, X_n$ drawn from the  uniform distribution on $\{1,\cdots,K_n\}$ where $K_n \sim n / \lambda$.
See \cite{Falk} (Example 8.5.2) and \cite{Kolchin} for the related extreme-value theory.
Here, for $n$ large, the distribution of $M_n$ is close to Geometric($1-(1/e)^{\lambda}$). 
The scaling dictated by convergence to the Poisson limit is $(j/n, \lambda X_j)$.

Following the familiar path, we compare
stopping on a record $(t,x)$, for given $\lceil x \rceil = k$, with  stopping on the next available record.
% we get the following. %Consider a record arrival $(t,x)$ with $\lceil x \rceil=k$.
For $k=1$ stopping is the optimal action for all $t$. We set $t_1=0$ and $z_{1}^{(\lambda)} := e^{\lambda}$. %Let $z_k := e^{\lambda(1-t_k)}$.
For $k \geq 2$, whenever a positive solution to
\begin{equation}
	\sum_{j=2}^k \frac{z^j}{j}=h_k, ~~~h_k:=\sum_{j=1}^k \frac{1}{j}
\end{equation}
is smaller than $e^{\lambda}$, we define $z_{k}^{(\lambda)}$ to be this solution and set $z_{k}^{(\lambda)} := e^{\lambda(1-t_k)}$. Otherwise, we set $z_{k}^{(\lambda)} = e^{\lambda}$ (which corresponds to setting the threshold $t_k$ to $0$). The optimal stopping time is thus given by 
$$\tau=\inf\left\{t:~(t,Z_t)~{\rm is ~a~record} {\rm ~with~} t \geq  1-\frac{\log(   z_{k}^{(\lambda)})}{\lambda}{\rm ~for~} k=\lceil Z_t\rceil\right\}.$$
Equivalently,  the stopping boundary is
$$b(t) =\sum_{k=1}^{\infty} k \,1(t_k<t\leq t_{k+1})= \sum_{k=1}^{\infty} k \,\,1(z_{k+1}^{(\lambda)}\leq e^{\lambda(1-t)} < z_{k}^{(\lambda)}).$$

The optimal success probability decomposes into the jump and drift terms: 
$${\mathbb P}(Z_\tau=\min_{t\in[0,1]} Z_t) = J_{\lambda} + D_{\lambda},$$ 
where

\begin{eqnarray*}
J_{\lambda} &=& \sum_{k=1}^\infty \sum_{j=1}^k  \frac{e^{-\lambda k}}{j} \left((z_k^{(\lambda)})^j-(z_{k+1}^{(\lambda)})^j\right), \\
	D_{\lambda} &=& \sum_{k=0}^{\infty} e^{-\lambda k} (e^{\lambda}-z_k^{(\lambda)}).
\end{eqnarray*}
The numerical values of the best choice probability 
are plotted in Figure \ref{fig:lambdas}.
% suggests that indeed the success probability values converge to $\underline{v} = 0.580164\cdots$ by $\lambda \to 0$.

%(The drift term for the general boundary with nondecreasing cutoffs $t_k$ is given by
%$D_{\lambda} := \sum_{k=i_{\lambda}}^{\infty} e^{-\lambda k} (e^{\lambda t_k}-1) \sum_{j=1}^{k}\frac{e^{\lambda j(1-t_k)}-1}{j}.)$

\begin{figure}[!ht]
	\begin{subfigure}{.5\textwidth}
		\centering
		\includegraphics[width=.9\linewidth]{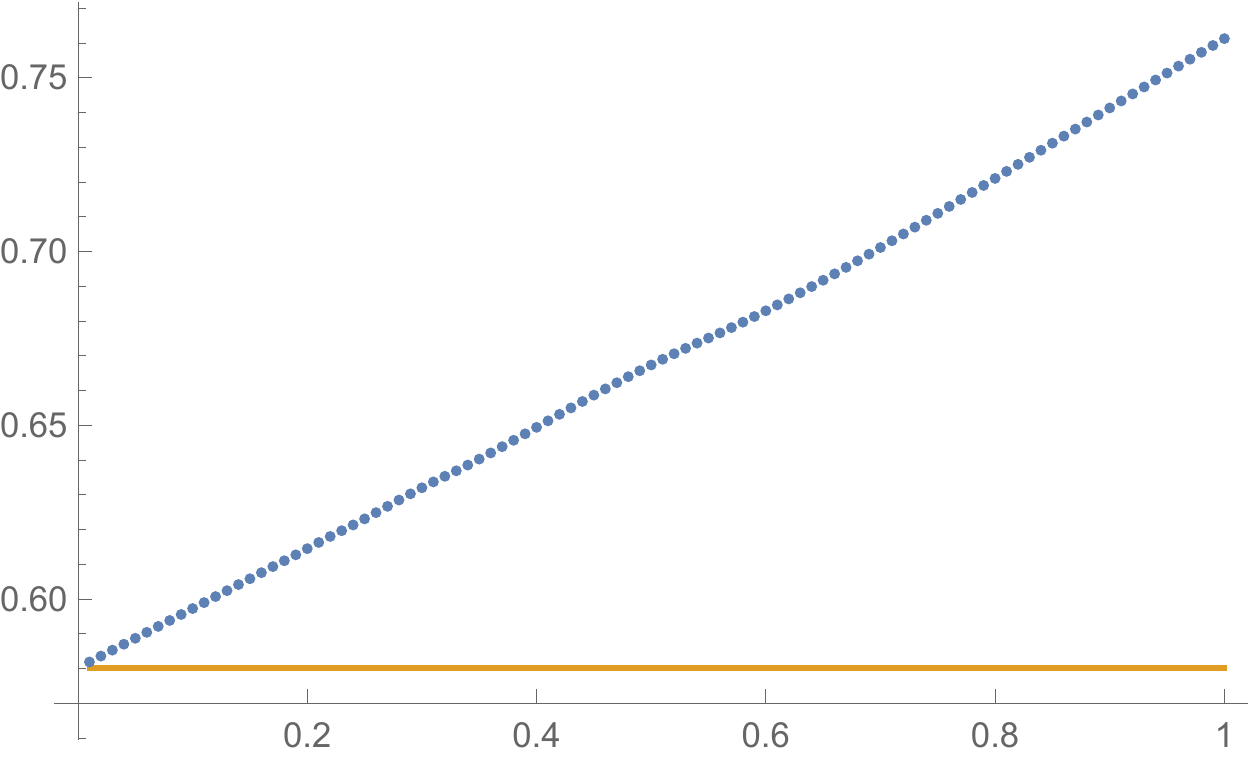}
		\caption{$\lambda \in \{0.01, 0.02, \cdots, 1\}$} 
		%\label{fig:degree_dist_R}
	\end{subfigure}
	\hfill
	\begin{subfigure}{.5\textwidth}
		\centering
		\includegraphics[width=.9\linewidth]{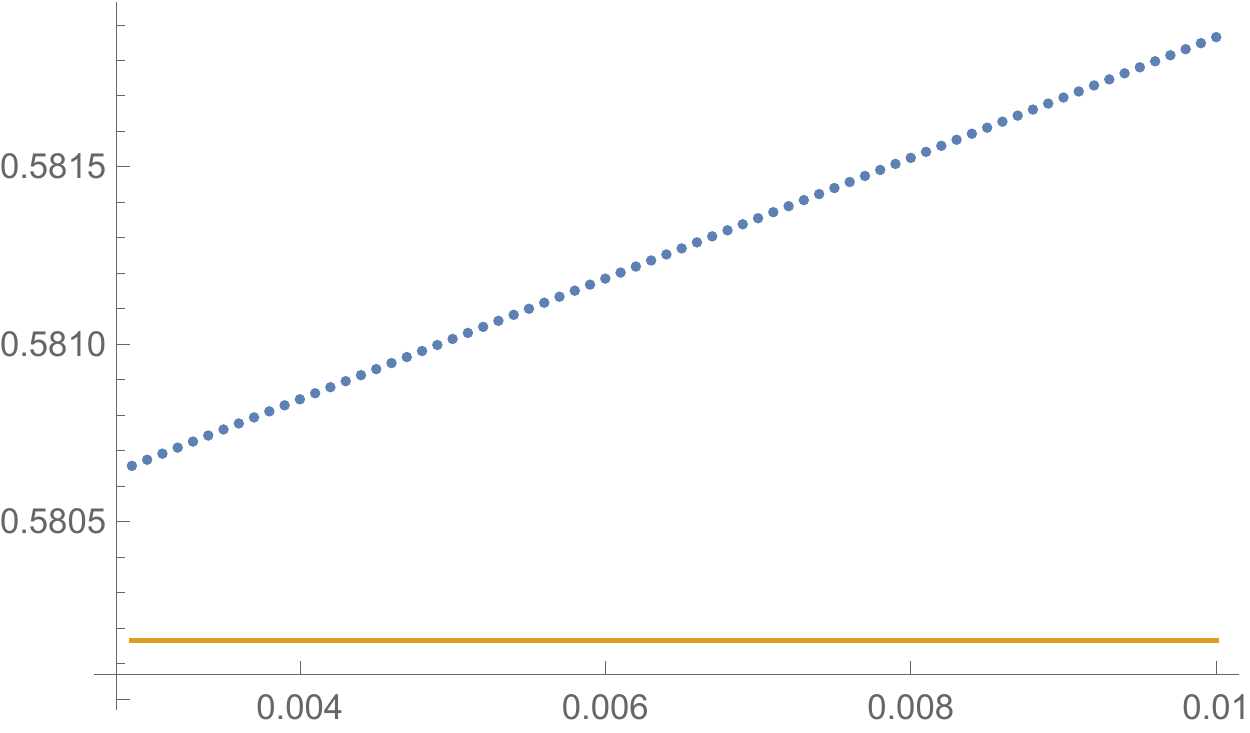}
		\caption{$\lambda \in \{0.003, 0.0031, \cdots, 0.01\}$}
		%\label{fig:degree_dist_Rcom}
	\end{subfigure}
	\caption{The success probability values for different ranges of $\lambda$,   in comparison with the benchmark success probability  $0.580164\cdots$ from the full-information game.}
	\label{fig:lambdas}
\end{figure}

%\section{Open questions}
%The full-information game was extended in the number of interesting directions (consult \cite{Porosinski} for a random number of observations, \cite{Petruccelli_full_info} for a version with recalls and %uncertainty of selection or \cite{Kuchta} for an iterated variant). Nevertheless, one can still think of a natural extensions that have not appeared in the literature so far, e.g., a $d$-dimensional version of the %game. 
%It is of interest to investigate a $d$-dimensional version of a full-information game.
%Suppose that $X_1, \cdots, X_n$ are sampled independently and uniformly from $[0,1]^d$. %and consider a natural coordinatewise partial order.
%Assume that $X_1$ is the first \textit{chain record}. The second chain record is the first sample point smaller than $X_1$ according to the coordinate-wise partial order (thus smaller in all coordinates), etc. %This procedure builds a greedy chain starting from $X_1$. The task is to maximise the probability of stopping at the last chain record. The problem may be approached using techniques presented in this paper.
%The problem can be reduced in the limit to the self-similar unit-rate Poisson process in $[0,1] \times [0,\infty)^d$ with the box `area' of the form $z=x_1\cdots x_d(1-t)$.

%\bibliographystyle{plain}
%\bibliography{hitting_bib}

\end{document}